\documentclass[pdflatex,sn-mathphys-num]{sn-jnl}% Math and 

\usepackage{graphicx}%
\usepackage{multirow}%
\usepackage{amsmath,amssymb,amsfonts}%
\usepackage{amsthm}%
\usepackage{latexsym}
\usepackage[title]{appendix}%
\usepackage{xcolor}%
\usepackage{textcomp}%
\usepackage{manyfoot}%
\usepackage{booktabs}%
\usepackage{algorithm}%
\usepackage{algorithmicx}%
\usepackage{algpseudocode}%
\usepackage{listings}%
\usepackage{caption}
\usepackage{subcaption}
\usepackage[normalem]{ulem}
\usepackage{tikz}
\usetikzlibrary{calc}
\usetikzlibrary{arrows.meta, positioning, shadows}

\def \bx{{\mathbf {x}}}
\def \by{{\mathbf {y}}}
\def \bv{{\mathbf {v}}}
\def \be{{\mathbf {e}}}
\def \bw{{\mathbf {w}}}

\theoremstyle{thmstyleone}%
\newtheorem{theorem}{Theorem}%  meant for continuous 
\newtheorem{proposition}[theorem]{Proposition}% 
\theoremstyle{thmstyletwo}%
\newtheorem{remark}{Remark}%

\theoremstyle{thmstylethree}%

\begin{document}

\title[Article Title]{Optimal mean-time path planning for unmanned underwater vehicles: a Hamilton-Jacobi approach}

\author[1]{\fnm{Jonathan} \sur{Valyou}}\email{jmv18b@fsu.edu}

\author[2]{\fnm{Jeremy} \sur{Brandman$^\dagger$}\footnotetext{$\dagger$ Work conducted while affiliated with the U.S. Naval Research Laboratory.}}\email{brandmanj@metsci.com}

%\equalcont{These authors contributed equally to this work.}
%\footnote{Work conducted while affiliated with the U.S. Naval Research Laboratory.  Currently affiliated with Northrop Grumman Corporation; email: jeremy.brandman@ngc.com}

\author*[1]{\fnm{Sanghyun} \sur{Lee}}\email{slee17@fsu.edu}
%\equalcont{These authors contributed equally to this work.}

\affil[1]{\orgdiv{Department of Mathematics}, \orgname{Florida State University}, \orgaddress{\street{1017 Academic Way}, \city{Tallahassee}, \postcode{32306}, \state{Florida}, \country{United States}}}

\affil[2]{\orgname{Metron, Inc.}, \orgaddress{\street{1818 Library Street, Suite 600}, \city{Reston}, \state{Virginia}, \postcode{20190}}}

%\affil[3]{\orgdiv{Department}, \orgname{Organization}, \orgaddress{\street{Street}, \city{City}, \postcode{610101}, \state{State}, \country{Country}}}

\abstract{
Unmanned underwater vehicles (UUV) integrate ocean forecasts with path planning algorithms in order to identify energy- or time-minimizing paths that enable mission completion.  Typically, a well-defined deterministic ocean forecast is assumed to be available for path planning; however, in practice, different ocean forecasts can disagree.  In this paper, we extend previous work on deterministic optimal path planning \cite{Brandman2023} to identify optimal mean-time paths when presented with an ensemble of possible ocean forecasts.  In particular, we formulate a system of time-independent Hamilton-Jacobi partial differential equations that incorporates forecast uncertainty and yields the optimal mean reachability travel time and the necessary controls to find the associated optimal path.  An efficient numerical solution of this system of PDEs is obtained through an extension of the Fast Sweeping Method~\cite{Kao2004}; verification and benchmarking results are provided.  Additional numerical examples illustrate the impact uncertainty can have on the optimal path; in particular, these results demonstrate that the vehicle's optimal path can deviate significantly from the deterministic optimal paths associated with the individual ensemble members.
}

\keywords{Hamilton-Jacobi equations, optimization, optimal control, path planning, iterative methods, gradient methods}

%%\pacs[JEL Classification]{D8, H51}

%%\pacs[MSC Classification]{35A01, 65L10, 65L12, 65L20, 65L70}

\maketitle

\section{Introduction}

Unmanned underwater vehicles (UUV) are of interest because of their ability to navigate the ocean floor and perform important search and surveillance tasks without direct human intervention.  At present, mission duration for such vehicles is limited by battery life; as a result, path planning is commonly used to identify an energy-minimizing or time-optimal path based on a forecast of the ocean current.  However, the prediction of the ocean's local velocity field is typically uncertain~\cite{Lermusiaux2006}, resulting in the divergence of different ocean current models in a given domain of interest.  This raises the following question: how to identify an optimal vehicle path, given that the underlying ocean current model is uncertain?

Path planning problems can often be solved effectively using techniques from optimal control.  In this context, the solution of the Hamilton-Jacobi-Bellman partial differential equation (PDE) associated with a given optimal control problem provides a direct means of computing the minimum cost to reach a target state from an initial one~\cite{Bellman1957, Fleming2006}.  The Hamilton-Jacobi-Bellman equation represents a profound and non-intuitive link between a family of optimization problems and the solution of a nonlinear PDE.   The simplest example where Hamilton-Jacobi-Bellman equations apply to path planning is the eikonal equation, which can be used to identify shortest paths in the Euclidean metric~\cite{Sethian1999, Tsitsiklis1995}.  

The original work addressing path planning under uncertainty built upon the principles of stochastic optimal control and robust control.  In early algorithms, this amounted to including a diffusive term in the Hamilton-Jacobi-Bellman equation in order to account for uncertain dynamics~\cite{Isaacs1965}.  This led to the concept of robust control theory, where worst-case solutions are considered~\cite{Basar1995}.  Ensemble-based methods represent an alternative approach; these methods generate multiple realizable dynamics through the application of deterministic processes to a family of possible scenarios~\cite{Evensen2003, Calafiore2006, Mesbah2016}.  Our approach falls under the latter category. 

A wide range of approaches have been proposed for deterministic vehicle path planning, including graph-based approaches such as the $A^*$ algorithm~\cite{Garau2005}, level set-based methods~\cite{Lolla2014}, methods based on fast marching~\cite{Petres2007} and fast sweeping methods~\cite{Brandman2023, Parkinson2020}.  Approaches to incorporating uncertainty in path planning are less mature and are typically extensions of deterministic approaches.  These include graph-based frameworks for path planning with time dependent uncertainty~\cite{Wellman2013, Rathbun2002}, and level set path planning with an added stochastic term~\cite{Mitchell2005, Subramani2018} or risk cost function additions~\cite{Subramani2019}.  Graph-based methods utilize a modified exhaustive search algorithm, such as Dijkstra's Algorithm, to store all possible paths and select the one that minimizes the reachability time. The level set approach uses Monte Carlo to evolve in time an ensemble of possible optimal paths; the path that arrives first is recovered by backtracking or solving a constructed Boundary Value Problem backwards in time.  While these methods are robust and offer convergence guarantees, their applicability is limited by how accurately the stochastic ocean models they rely on reflect real-world ocean forecasting.

 In this paper, we develop a relatively straightforward approach to vehicle path planning under uncertainty that only requires, as input, an ensemble of ocean forecast models and their associated likelihoods.  Specifically, we present a novel approach to optimal path planning under uncertainty utilizing a system of Hamilton-Jacobi partial differential equations to identify a vehicle path that globally minimizes the vehicle's mean reachability time.  While the resulting formulation is elegant and relatively straightforward, there is a significant limitation: it does not address the worst-case behavior of the proposed path, as would be identified using robust control theory.  Instead our method produces a result that is similar to a risk-neutral control problem as the optimal path based on the mean reachability time is indeed constructed under a risk-neural setting.  Additionally, our approach differs from standard methods as we also recover the deterministic optimal paths under each ocean model ensemble member.

The contributions of this work are twofold.  First, from an application perspective, the present work extends the deterministic optimal path planning framework introduced in~\cite{Brandman2023} through the incorporation of an ensemble of potential ocean current velocity models, each weighted according to its likelihood of realization. In particular, we demonstrate that the framework is practical for path planning under realistic ocean models.  Second, from a numerical analysis perspective, the present work generalizes the Lax Friedrich's Fast Sweeping method~\cite{Kao2004} - introduced as an algorithm for solving a single Hamilton-Jacobi partial differential equation - to systems of Hamilton-Jacobi partial differential equations.  The novelty lies in the generalized fast sweeping algorithm, proposed here, for solving systems of Hamilton-Jacobi equations.

The remainder of the paper is organized as follows. Section \ref{sec:2} uses the dynamic programming principle to derive the system of Hamilton-Jacobi PDEs governing the optimal mean-time for the vehicle to reach its destination.  In addition, we show how to identify optimal vehicle controls; these are used to determine the corresponding optimal mean-time path.  
The proposed numerical method for solving the system of Hamilton-Jacobi equations, based on alternating sweeps for each equation, is presented in Section \ref{sec:3}.  Section \ref{sec:4} presents two-dimensional numerical results that validate our approach and highlight cases where the mean-optimal path deviates significantly from the optimal paths associated with the individual ensemble members.  Section \ref{sec:5} summarizes our results and discusses future directions.

\section{Governing System: Uncertainty Framework}
\label{sec:2}

\subsection{Transport Model}

We model UUV transport as the combination of its own propulsion and advection by the background ocean current.
For a given computational domain $\bx \in \Omega  \subset \mathbb{R}^2$ and prescribed time interval $t \in [0,T_f]$ where $T_f$ is the total travel-time, we define $\by :=\by(t): (0,T_f] \rightarrow \Omega$ as the vehicle's position, $\bv_w:=\bv_w(t): [0,T_f] \rightarrow \mathbb{R}^d$ as the vehicle's velocity relative to water (propulsion), and $\bv_c:= $ $\bv_c(\bx,t):\Omega \times [0,T_f] \rightarrow \mathbb{R}^d$ as the ocean current velocity model. Applying the approach presented in~\cite{Brandman2023}, $\by(t)$ satisfies the following ordinary differential equation (ODE):
\begin{equation}
   \frac{d\by}{dt} = \bv_w(t) + \bv_{c}(\by,t) \text { in } \Omega \times (0,T_f],
\label{eqn:det-ode}
\end{equation}
where the initial condition is given as 
$\by(0) = \bx_S$.

We assume the maximum ocean current speed ($\max\limits_{\bx,t}$$|\bv_c(\bx,t)|$) is smaller than the maximum vehicle speed, $s_{max}\in \mathbb{R}^+$.  This ensures that the vehicle can reach any target point $\bx_E$ in the domain. We express these constraints as

\begin{equation}
{\max\limits_{t}|\bv_w(t)|  \leq } s_{max} \quad  \forall t \quad  \text{ and } \quad 
{ \max\limits_{\bx,t}}|\bv_{c}(\bx,t)| { < } s_{max} \quad \   \forall (\bx,t).  
\label{eqn:transport_ode_condition}
\end{equation}

In particular, the parametrized vehicle's position $\by(t)$ is denoted as a given path $\gamma:=\gamma(t)$ from $\gamma(0) = \bx_S$ to $\gamma(T) = \bx_E$. Let $\bx_p \in \Omega$ be an arbitrary point along the path $\gamma$ that is reached at a time $t_p$.  See Figure \ref{fig:detpathplanningproblem} for more details.

\begin{figure}[!ht]
\centering

\begin{tikzpicture}

  \draw[thick] (-1,0) rectangle (8,4);

  \coordinate (X_s) at (1,1);
  \coordinate (X_E) at (7,3);

  \fill (X_s) circle (2pt) node[below] {$\by(0)=\bx_s$};
  \fill (X_E) circle (2pt) node[above right] {$\bx_E$};

  \draw[thick, blue, ->] 
    (X_s) .. controls (2,3) and (6,1) .. 
    node[midway, above] {$\gamma$}
    (X_E);

  \coordinate (Xp) at (1.75,1.7);
  \fill (Xp) circle (2pt) node[above left] {$\by(t_p) = \bx_p$};

  \draw[->, thick, brown] (Xp) -- ++(1,0) node[midway, below] {$\vec{\bv}_w$};

  \coordinate (vwTip) at ($(Xp)+(1,0)$);
  \draw[->, thick, green!60!black] (vwTip) -- ++(0.8,0.3) node[midway, below] {$\vec{\bv}_c$};

  \draw[->, thick, red] (Xp) -- ++(1.8,0.3) node[midway, above] {$\frac{d\by}{dt}$};

\end{tikzpicture}
\caption{Deterministic path planning}
\label{fig:detpathplanningproblem}
\end{figure}

\subsection{Path planning under uncertainty}
In practice,  ocean forecasts carry significant uncertainty; this introduces variability in the ocean current velocity $\bv_c(\bx,t)$. To address this, we aim to incorporate an ensemble of $n$ ocean current velocity models into our path planning algorithm and obtain an optimal path $\gamma(t)$ that minimizes the mean travel-time of the vehicle.

We denote by $\bv_c(\bx,t)$ the true ocean current velocity model (Figure \ref{fig:ensemble}) (a); we assume that we have access to an ensemble of $n$ ocean current velocity models $\bv_{c,1}(\bx,t), \bv_{c,2}(\bx,t), ..., \bv_{c,n}(\bx,t)$ corresponding to different forecasts of $\bv_c(\bx,t)$ (Figure \ref{fig:ensemble}) (b).
%To address this, we consider  
We note that each $\bv_{c,i}$ is associated with the probability  $p(i) \in [0,1]$ where $i=1,2,...,n$ of being realized such that $\sum_{i=1}^n p(i) = 1$.  Moreover, the formed probability distribution indicates our certainty of how likely each $\bv_{c,i}$ in relation to one another is to represent the true ocean model.  The impact will be seen later in Section \ref{sec:mrt} in our definition of the mean reachability time.

\begin{figure}[!ht]
\centering
\begin{tikzpicture}[scale=0.7, >=Stealth, every node/.style={font=\small}]

\draw[thick] (0,0) rectangle (3,3);
\node at (1.5,3.3) {\(\mathbf{v}_c(\bx,t)\)};
\node at (1.5,-0.5) {(a) true};

\foreach \x in {0.5,1.5,2.5} {
  \foreach \y in {0.5,1.5,2.5} {
    \pgfmathsetmacro{\dx}{0.3*sin(\y*30)}
    \pgfmathsetmacro{\dy}{0.3*cos(\x*30)}
    \draw[->, blue] (\x,\y) -- ++(\dx,\dy);
  }
}

\draw[->, thick] (3.2,1.5) -- (4.8,1.5);

\foreach \i/\label in {0/{\(\mathbf{v}_{c,1}(\bx,t)\)}, 3.6/{\(\mathbf{v}_{c,2}(\bx,t)\)}, 9.2/{\(\mathbf{v}_{c,n}(\bx,t)\)}} {
  \draw[thick] (5+\i,0) rectangle (8+\i,3);
  \node at (6.5+\i,3.3) {\label};
  \foreach \x in {0.5,1.5,2.5} {
    \foreach \y in {0.5,1.5,2.5} {
      \pgfmathsetmacro{\dx}{0.3*sin(\y*30 + 5*\i + 10*\x)}
      \pgfmathsetmacro{\dy}{0.3*cos(\x*30 + 5*\i - 10*\y)}
      \draw[->, red!70!black] (5+\i+\x,\y) -- ++(\dx,\dy);
    }
  }
}

\node at (12.8,1.5) {\Large \(\cdots\)};

\node at (11.0,-0.5) {(b) ensemble};

\end{tikzpicture}
\caption{Uncertainty in ocean models: (a) the true ocean current model; (b) the ensemble of forecasted ocean current models.}
\label{fig:ensemble}
\end{figure}

Thus, by considering each ocean model we obtain the following vehicle transport model to obtain $\by_i(t)$, satisfying:
\begin{equation}
   \frac{d\by_i}{dt} = \bv_w(t) + \bv_{c,i}(\bx,t) \text { in } \Omega \times (0,T_f].
\label{eqn:det-ode_i}
\end{equation}
with $\by_i(0) = \bx_S$.
As before, we assume that
\begin{equation}
{ \max\limits_{t}}|\bv_w(t)| { \leq } s_{max}  \ \forall t,  \text{ and } 
{ \max\limits_{\bx,t}}|\bv_{c,i}(\bx,t)| { < } s_{max} \ \forall (\bx,t), 
i=1, \cdots, n
\label{eqn:ode_condition}
\end{equation}
to ensure that the vehicle can reach any target point ${\bf x}_E$ in the domain.

Our approach to path planning in the context of an ensemble of ocean models generalizes the deterministic framework~\cite{Brandman2023}.  For a given starting point $\bx_S$ and end point $\bx_E$, we seek a vehicle path $\gamma$ that connects the two points and minimizes the mean travel-time.  See Figure \ref{fig:meanpathplanningproblem} for an illustration of path planning under uncertainty.

\begin{figure}[!h]
\centering
\begin{tikzpicture}[scale=0.7, >=Stealth, every node/.style={font=\small}]

\foreach \i/\label in {0/{\(\mathbf{v}_{c,1}(\bx,t)\)}, 3.6/{\(\mathbf{v}_{c,2}(\bx,t)\)}, 9.2/{\(\mathbf{v}_{c,n}(\bx,t)\)}} {
  \draw[thick] (5+\i,0) rectangle (8+\i,3);
  \node at (6.5+\i,3.3) {\label};
  
  \foreach \x in {0.5,1.5,2.5} {
    \foreach \y in {0.5,1.5,2.5} {
      \pgfmathsetmacro{\dx}{0.3*sin(\y*30 + 5*\i + 10*\x)}
      \pgfmathsetmacro{\dy}{0.3*cos(\x*30 + 5*\i - 10*\y)}
      \draw[->, red!70!black] (5+\i+\x,\y) -- ++(\dx,\dy);
    }
  }
  
  \draw[thick, black] 
    (5+\i+0.3,0.2) 
    .. controls (5+\i+1.5,1+\i*0.2) and (5+\i+2.5,2+\i*0.1) 
    .. (5+\i+2.7,2.8);
  \filldraw[black] (5+\i+0.3,0.2) circle (2pt);
  \filldraw[black] (5+\i+2.7,2.8) circle (2pt);
  \node at (5+\i+0.9,0.3) {\(x_S\)};
  \node at (5+\i+2.1,2.7) {\(x_E\)};
}

\node at (12.8,1.5) {\Large \(\cdots\)};

\node at (11.0,-0.5) {(a) deterministic optimal paths};

\draw[->, thick] (10.0,-0.8) -- (10.0,-1.8);

\draw[thick] (8.5,-5) rectangle (11.5,-2);
\node at (10.0,-5.5) {(b) mean optimal path};

\foreach \x in {0.5,1.5,2.5} {
  \foreach \y in {0.5,1.5,2.5} {
    \pgfmathsetmacro{\dx}{0.3*sin(\y*30)}
    \pgfmathsetmacro{\dy}{0.3*cos(\x*30)}
    \draw[->, blue!50!gray] (8.5+\x,-5+\y) -- ++(\dx,\dy);
  }
}

\draw[thick, red]
  (8.8,-4.8) 
  .. controls (9.5,-4.0) and (10.5,-3.2)
  .. (11.2,-2.2);
\filldraw[red] (8.8,-4.8) circle (2pt);
\filldraw[red] (11.2,-2.2) circle (2pt);
\node at (9.4,-4.7) {\(x_S\)};
\node at (10.6,-2.3) {\(x_E\)};

\end{tikzpicture}

\caption{Path planning with uncertainty: (a) for each ocean model in our ensemble, deterministic path planning identifies a path that minimizes the vehicle's travel-time; (b) the optimal mean-time path from $x_S$ to $x_E$ minimizes the mean travel-time of the vehicle; this path is potentially distinct from the family of deterministic optimal paths displayed in (a).}
\label{fig:meanpathplanningproblem}
\end{figure}

\subsection{Mean Reachability Time}
\label{sec:mrt}
We seek to determine the curve $\gamma(t)$ that minimizes the mean vehicle travel-time between a fixed starting point $\bx_S$ and an arbitrary target point $\bx$ in $\Omega$.  This allows for the computation of multiple possible target points rather than a single $\bx_E$.  Computing such a path directly, through nonlinear optimization, has several drawbacks: it can be computationally expensive (especially if multiple target points are desired) and can stall at non-optimal critical points~\cite{Sethian2003}.

We take a different approach that utilizes principles of optimal control and Hamilton-Jacobi partial differential equations.  First, we use the dynamic programming principle to compute the mean travel time (henceforth referred to as the mean reachability time) as a function of an arbitrary target point residing in our computational domain.  The key to doing this is the observation, presented in Section \ref{subsec:HJ}, that the mean reachability time satisfies a system of Hamilton-Jacobi partial differential equations.  Second, we show that the determination of optimal paths reduces to the solution of ordinary differential equations once the mean reachability time function is computed.

For a given curve $\gamma(t)$ connecting $\bx_S$ to $\bx$, let $T_i := T_i(\bx,\bx_S,\gamma)$ denote the time required by the UUV to complete its transit under the \(i\)‑th ocean current velocity model \(\bv_{c,i}\) while following $\gamma$.  {Here, $T_i(\bx,\bx_S,\gamma)$ is assumed to be continuous and smooth away from $\bx_S$.}  Then the mean reachability time \(\bar{T}(\bx,\bx_S,\gamma)\)  in the presence of \(n\) such models is defined by
\begin{equation}\label{eq:weighted-time}
\bar{T}(\bx,\bx_S,\gamma)
:=\sum_{i=1}^n p(i)\,T_i(\bx, \bx_S,\gamma).
\end{equation}
Our goal is to determine the minimal mean reachability time $u:=u(\bx,\bx_S):\Omega \times \Omega \rightarrow \mathbb{R}$:
\begin{equation}\label{eq:u-def}
u(\bx,\bx_S) := \min_{\gamma} \bar{T}(\bx, \bx_S, \gamma)
\end{equation}
under the $n$ ocean velocity models from a starting position $\bx_S$ to any position $\bx$ in $\Omega$ where the { mean reachability time} minimizing path is denoted $\gamma^*$.  {Here, in theory, $u(\bx,\bx_S)$ is also assumed to be of a continuous and smooth away from $\bx_S$ regularity class.}

Because our novel approach to computing the mean reachability time is based on the dynamic programming principle (and hence recursive), it turns out that we must solve for $u(\bx,\bx_S)$ at all points $\bx \in \Omega$. While this may seem unappealing, this approach provides the ability to pick any $\bx \in \Omega$ as $\bx_E$ and find its { mean reachability time} minimizing path $\gamma^*$.  We will show in Section \ref{sec:3} that the computation of $u(\bx,\bx_S)$ can be carried out efficiently by the fast sweeping method.

\subsection{Maximum UUV Speed}
\label{subsec:max}

\begin{figure}[!h]
\centering
\begin{tikzpicture}[scale=1, every node/.style={font=\small}, >=Stealth]

\draw[thick] (0,0) rectangle (8,4);

\coordinate (X_s) at (1,1);
\coordinate (X_E) at (7,3);
\fill (X_s) circle (2pt) node[below left] {$\bx_s$};
\fill (X_E) circle (2pt) node[above right] {$\bx$};

\draw[thick, blue, ->] 
  (X_s) .. controls (2,3) and (6,1) .. 
  node[midway, above] {$\gamma$}
  (X_E);

\coordinate (Xp) at (1.75,1.7);
\fill (Xp) circle (2pt) node[below ] {$\bx_p$};

\draw[dashed] ($(Xp)+(-0.2,-0.2)$) rectangle ($(Xp)+(0.2,0.2)$);

\begin{scope}[shift={(10,0)}]

  \draw[thick] (0,0) rectangle (3,3);
  \node at (1.5,3.2) {Zoom at $\bx_p$};

  \draw[thick, blue, ->]
    (0.0,0.2) .. controls (1.1,1.5) and (1.9,1.7) .. (3.0,2.5);

  \coordinate (XpM) at (1.5,1.5);
  \fill (XpM) circle (2pt) node[below] {$\bx_p$};

  \draw[->, thick] (XpM) -- ++(1,0) node[right] {$x_1$};
  \draw[->, thick] (XpM) -- ++(0,1) node[above] {$x_2$};

  \draw[->, thick, red] (XpM) -- ++(0.8,0.6);

  \draw (XpM) ++(0.5,0) arc[start angle=0, end angle=50, radius=0.4];
  \node at ($(XpM)+(0.7,0.25)$) {$\theta_p$};

\end{scope}

\draw[dashed] (Xp) -- (11.5,1.5); 

\end{tikzpicture}

\caption{Travel direction setup in 2D}
\label{fig:setup}
\end{figure}

We define $\theta := \theta(\bx,t): \Omega \times [0,T_f] \rightarrow [0,2\pi)$ as the counterclockwise direction the UUV is traveling at a given point $\bx$ with respect to the $x_1$-axis.  Thus, $\theta_p:= \theta(\bx_p,t_p)$ is the UUV's travel direction at spatial position $\bx_p$ and given time $t_p$.  See Figure \ref{fig:setup} for the illustration.

Due to the $\bv_{c,i}$ vectors differing at a point $\bx$, the single $\theta$ direction requires the vehicle to have different maximum speeds under each $\bv_{c,i}$ model.  Therefore, in this paper, we define $s_{i,max}(\theta,\bx,T_i): [0,2\pi] \times \Omega \times [0,T_f] \rightarrow \mathbb{R}$ as the maximum UUV speed relative to the ground in the direction of $\theta$ considering the $i$-th ocean model as the following
\begin{equation}
s_{i,max}(\theta,\bx,T_i) := (\bv_{c,i}(\bx,T_i) \cdot \be_\theta) + \sqrt{s^2_{max} - (\bv_{c,i}(\bx,T_i) \cdot \be_\theta^\perp)^2},
\label{eqn:speed}
\end{equation}
where $\be_\theta := [\cos(\theta),\sin(\theta)]^\top$ is the unit vector in the direction of $\theta$.

To derive the above $s_{i,max}(\theta, \bx, T_i)$, we first view the UUV velocity with respect to the $i$-th ocean model as an orthogonal projection
\begin{equation}
s_{i,max}(\theta,\bx,T_i) \be_{\theta} = \beta_1 \be_\theta + \beta_2 \be_\theta^\perp + (\bv_{c,i} (\bx,T_i) \cdot \be_\theta)\be_\theta + (\bv_{c,i}(\bx,T_i)\cdot \be_\theta^\perp)\be_\theta^\perp.
\end{equation}

To find $\beta_1$ and $\beta_2$, we first observe $\beta_1^2 + \beta_2^2 \leq s_{max}^2$ since the vehicle cannot travel faster than its maximum speed. Second, note that orthogonality implies that $\beta_2 = -(\bv_{c,i}(\bx,T_i) \cdot \be_\theta^\perp)$.
Then, for $\beta_1$, we obtain the following 
\begin{equation}
\beta_1 \leq \sqrt{s_{max}^2-\beta_2^2} =\sqrt{s_{max}^2 -(\bv_{c,i}(\bx,T_i) \cdot \be_\theta^\perp)^2}.
\end{equation}
Finally,  we have derived 
the maximum vehicle speed (relative to the ground) in the $\theta$ direction considering the $i$-th ocean model as
$$
s_{i,max}(\theta,\bx,T_i) = (\bv_{c,i}(\bx,T_i) \cdot \be_\theta) + \sqrt{s^2_{max} - (\bv_{c,i}(\bx,T_i) \cdot \be_\theta^\perp)^2}.
$$

Given the definition of $s_{i,max}(\theta,\bx,T_i)\be_\theta$ as the UUV velocity in the direction of $\theta$ considering the $i$-th ocean model relative to the ground, we can rewrite the Transport ODE \eqref{eqn:det-ode} associated with each ocean model:
\begin{equation}
    \frac{d\by_i}{dt} =  s_{i,max}(\theta,\by_i,T_i)\be_{\theta}
    \label{eq:ODErewritten}
\end{equation}
with initial condition given as 
\begin{equation}
    \by_i(0) = \bx_S,
    \label{eq:ODE_ic_rewritten}
\end{equation}
where $\theta$ is an optimal control in this problem.  We note that, later, an initial condition will correspond to a reachability time $T_i$, giving the vehicle's position at the target location $\bx_E$.

%We note that 
%\begin{equation}
%\max\limits_{\theta, {\bf x},T_i} s_{i,max}(\theta,\bx, %T_i) \leq s_{max}  , \quad \text{for } i=1,...,n
%\end{equation}
%to ensure that the vehicle can reach any target $\bx_E$ %along any path $\gamma$ within the domain.  
We assume throughout the remainder of the paper that 
\begin{equation}
    s_{i,max}(\theta,\bx, T_i) > 0, \quad \text{for } i=1,...,n.
\end{equation}
This ensures that each point in the domain is reachable and avoids degeneracies in the PDE system we will define next.  The above equation is equivalent to the condition stated earlier in the paper:
\begin{equation}
\lVert v_c \rVert < s_{max}.
\end{equation}
%as this can result in degeneracies in the PDE system we %will next define.

\subsection{Hamilton-Jacobi Formulation of the Optimal Mean Reachability Time}
\label{subsec:HJ}

In this section, we identify the system of Hamilton-Jacobi PDEs satisfied by $u(\bx,\bx_S)$ and $T_i(\bx,\bx_S,\gamma^*)$.  The derivation of this system is based on the dynamic programming principle and follows the presentation in~\cite{Brandman2023}.  This result is foundational for our approach; our numerical method for computing $u(\bx,\bx_S)$ and the associated optimal paths $\gamma^*$ is based on a numerical discretization of this system of PDEs.
\\

\begin{proposition}
The minimum mean reachability time $u(\bx,\bx_S)$ satisfies the following system of Hamilton-Jacobi Partial Differential Equations:
%%%
\begin{align}
    & \min_{\theta \in  [0, 2\pi)} \{ -\nabla u \cdot s_{1,max} (\theta, \bx, T_1) \be_\theta + \sum_{i=1}^{n} p(i) \frac{s_{1,max} (\theta, \bx, T_1)}{s_{i,max} (\theta, \bx, T_i)} \} = 0 \label{eqn:unc-hj}\\
    & \nabla T_i(\bx, \bx_S,\gamma^*) \cdot s_{i,max}(\theta^*,\bx, T_i)\be_{\theta^*} = 1 , \quad \text{for } i=1,...,n
    \label{eqn:det-hj}
\end{align}
subject to the boundary conditions
\begin{align}
    & u(\bx_S, \bx_S) = 0 , \quad T_i(\bx_S, \bx_S, \gamma^*) = 0, \quad \text{for } i=1,...,n \label{eqn:bc-hj}
\end{align}
%%%
where the Hamiltonian minimizer is denoted $\theta^*$.
\end{proposition}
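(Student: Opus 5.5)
I will follow the dynamic programming argument of~\cite{Brandman2023}, applied separately to the time functions $T_i$ and to the mean $u$. The argument is formal: as assumed above, $u$ and $T_i$ are smooth away from $\bx_S$.

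\textbf{Boundary conditions and the equations for $T_i$.} The boundary conditions \eqref{eqn:bc-hj} are immediate, since the trivial path reaches $\bx_S$ in zero time under every model. For \eqref{eqn:det-hj}, I fix the optimal path $\gamma^*$ and parametrize it by the model-$i$ travel time. By \eqref{eq:ODErewritten}, moving along $\gamma^*$ in direction $\theta^*$ advances the position at rate $s_{i,max}(\theta^*,\bx,T_i)\be_{\theta^*}$. The quantity $T_i(\by_i(t),\bx_S,\gamma^*)$ equals $t$, so differentiating in $t$ with the chain rule gives $\nabla T_i\cdot s_{i,max}\be_{\theta^*}=1$.

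\textbf{The equation for $u$.} I apply the dynamic programming principle to $u$. Take a point $\bx$ and a short final segment of an admissible path arriving at $\bx$ in direction $\theta$. Measured in model $1$, this segment takes time $\Delta t$, so it starts at $\bx-s_{1,max}(\theta,\bx,T_1)\be_\theta\,\Delta t+o(\Delta t)$. Its length is $\Delta s = s_{1,max}\Delta t$, so under model $i$ the same segment takes time $\Delta s/s_{i,max} = \Delta t\, s_{1,max}/s_{i,max}$ to leading order. The mean time along the path is additive over segments. Hence
\[
u(\bx,\bx_S)=\min_{\theta}\Bigl\{u\bigl(\bx-s_{1,max}\be_\theta\Delta t,\bx_S\bigr)+\Delta t\sum_{i=1}^n p(i)\frac{s_{1,max}(\theta,\bx,T_1)}{s_{i,max}(\theta,\bx,T_i)}\Bigr\}+o(\Delta t).
\]
Next I Taylor expand $u$, cancel $u(\bx,\bx_S)$, divide by $\Delta t$ and let $\Delta t\to 0$. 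This yields \eqref{eqn:unc-hj}, and the minimizing $\theta$ is exactly $\theta^*$, which also feeds \eqref{eqn:det-hj}.

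\textbf{Main obstacle.} The delicate point is justifying the dynamic programming step for a mean over models. The clock $T_i$ differs between models, so $s_{i,max}$ is evaluated at different times along the same geometric path. I would handle this by parametrizing everything by arc length along $\gamma$. Then each $T_i$ is an arc-length integral of $1/s_{i,max}$, so $\bar T$ is the single functional $\int \sum_i p(i)/s_{i,max}\,ds$. This is additive under concatenation of paths, and the principle of optimality (optimal paths have optimal tails) follows. The $T_i$ entering $s_{i,max}$ in the last segment are the model times along the prefix of $\gamma^*$, which is why the system couples $u$ to the $T_i$ of \eqref{eqn:det-hj}.

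I expect this coupling to be the step needing most care. With time-dependent currents, an optimal prefix for $u$ need not be the prefix used to compute the $T_i$, so the cleanest statement treats the $T_i$ along $\gamma^*$ as given. I would state this explicitly and rely on the regularity assumptions for the formal limit.
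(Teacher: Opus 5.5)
Your proposal follows essentially the same formal argument as the paper: the same dynamic-programming split with final segment $dS=\Delta t\, s_{1,max}\be_\theta$, local mean cost $\Delta t\sum_i p(i)\,s_{1,max}/s_{i,max}$, then Taylor expansion and division by $\Delta t$. Your chain-rule derivation of $\nabla T_i\cdot s_{i,max}\be_{\theta^*}=1$ from $T_i(\by_i(t))=t$ is just the continuous form of the paper's Taylor step along $\gamma^*$, and you build maximal speed into the definition of $T_i$ where the paper argues for it via linearity in $s_1$. The coupling issue you flag for time-dependent currents, namely that a dynamic programming principle in $\bx$ alone is only formal when the model clocks $T_i$ differ, is not addressed in the paper either, which rests on the same assumed regularity of $u$ and $T_i$ along $\gamma^*$.
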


\begin{proof}

    Introduce a small change $dS$ in the optimal path connecting the starting point $x_S$ to another point $x$.  It follows from the dynamic programming principle and our definition of $u(\bx,\bx_S)$ that
    \begin{align*}
        &u(\bx,\bx_S) = \min\limits_{\substack{dS}}\{ u(\bx-dS,\bx_S) +  u(\bx,\bx-dS)\}.
    \end{align*}

    Define $0<t_1<t_2<...<t_{p-1}<t_p<...<T_f$ with uniform time-step size $\Delta t=t_p-t_{p-1}$.  Let $dS=\Delta t \cdot s_1(\theta,\bx,T_1)\be_\theta$ where $\Delta t \ll 1$; it follows that
    
    \begin{align*}
        &u(\bx,\bx_S) = \min\limits_{\substack{\theta \in [0,2\pi) \\ 0 {\color{violet} <} s_1(\theta,\bx,T_1) \leq s_{1,max}(\theta, \bx,T_1)}} \{ u(\bx-\Delta t \cdot s_1(\theta,\bx,T_1)\be_\theta, \bx_S) + u(\bx,\bx-\Delta t \cdot s_1(\theta,\bx,T_1)\be_\theta) \}.
    \end{align*}
    By applying a Taylor Expansion to the first term on the right-hand side, we obtain
    \begin{equation}
        u(\bx,\bx_S) = \min\limits_{\substack{\theta \in [0,2\pi) \\ 0 {\color{violet} <}  s_1(\theta,\bx,T_1) \leq s_{1,max}(\theta, \bx,T_1)}} \{ u(\bx,\bx_{S})- \nabla u(\bx,\bx_{S}) \cdot \Delta t \cdot s_1(\theta,\bx,T_1)\be_\theta + u(\bx,\bx-\Delta t \cdot s_1(\theta,\bx,T_1)\be_\theta)\}.
    \end{equation}
    After canceling $u(\bx,\bx_S)$ from each side, we get
    \begin{equation}
        0 = \min\limits_{\substack{\theta \in [0,2\pi) \\ 0 {\color{violet} <}  s_1(\theta,\bx,T_1) \leq s_{1,max}(\theta, \bx,T_1)}} \{ - \nabla u \cdot \Delta t \cdot s_1(\theta,\bx,T_1) \be_\theta + u(\bx,\bx-\Delta t \cdot s_1(\theta,\bx,T_1)\be_\theta)\}.
    \end{equation}

    Next, since $\Delta t\ll1$, $\Delta t \cdot s_1(\theta,\bx,T_1)\be_\theta$ is small.  Therefore, it follows that the optimal path from $\bx-\Delta t \cdot s_1(\theta,\bx,T_1)\be_\theta$ to $\bx$ is approximately straight in the direction $\be_\theta$ with a length of $\Delta t \cdot s_1(\theta,\bx,T_1)$.  Additionally, note that the velocity under the $i$-th ocean model would be $s_i(\theta,\bx,T_i)\be_\theta$.  We recover the travel time under the $i$-th ocean model by taking the path length divided by the $i$-th model velocity resulting in $\Delta t \frac{s_1(\theta,\bx,T_1)}{s_i(\theta,\bx,T_i)}$.  Recall, that we are trying to find the mean optimal time $u$ along this path so we must apply the probability distribution of realizing each velocity under the path resulting in the reachability time expression

    \begin{equation}
        u(\bx,\bx-\Delta t \cdot s_1(\theta,\bx,T_1)\be_\theta) = \Delta t \cdot \sum_{i=1}^{n} p(i) \frac{s_1 (\theta, \bx, T_1)}{s_i (\theta, \bx, T_i)}),
    \end{equation}
    which substituting into the above equation results in 
    \begin{equation}
        0 = \min\limits_{\substack{\theta \in [0,2\pi) \\ 0 {\color{violet} <}  s_i(\theta,\bx,T_i) \leq s_{i,max}(\theta, \bx,T_i) }} \{ - \nabla u \cdot \Delta t \cdot s_1(\theta,\bx,T_1)\be_\theta + \Delta t \cdot \sum_{i=1}^{n} p(i) \frac{s_1 (\theta, \bx, T_1)}{s_i (\theta, \bx, T_i)})\}.
    \end{equation}
    Observe that the right-hand side of the above expression is linear in $s_1(\theta,\bx,T_1)$ so it achieves its minimum at the endpoint or the largest possible value of $s_1(\theta,\bx,T_1)$.  Since we assume an optimal path exists, it follows that $s_1(\theta,\bx,T_1)=s_{1,max}(\theta,\bx,T_1)$ minimizes the right side of the equation.  Additionally, we note that the right-hand side of the above expression varies inversely with respect to each $s_i(\theta,\bx,T_i)$.  It follows that setting $s_i(\theta,\bx,T_i)=s_{i,max}(\theta,\bx,T_i)$ minimizes the right side of the equation and yields
    \begin{equation}
        0 = \min\limits_{\substack{\theta \in [0,2\pi) }} \{ - \nabla u \cdot \Delta t \cdot s_{1,max}(\theta,\bx,T_1)\be_\theta + \Delta t \cdot \sum_{i=1}^{n} p(i) \frac{s_{1,max} (\theta, \bx, T_1)}{s_{i,max} (\theta, \bx, T_i)})\}.
    \end{equation}
To complete the derivation, we divide by $\Delta t$ and obtain the governing PDE
 \eqref{eqn:unc-hj}:
    \begin{align}
        0 = \min\limits_{\substack{\theta \in [0,2\pi)}} \{ - \nabla u  \cdot s_{1,max}(\theta,\bx,T_1)\be_\theta + \sum_{i=1}^{n} p(i) \frac{s_{1,max} (\theta, \bx, T_1)}{s_{i,max} (\theta, \bx, T_i)}\}.
    \end{align}
Lastly, note that the above argument indicates that the optimal control direction $\theta^*$ corresponds to the Hamiltonian minimizer $\theta$ in \eqref{eqn:unc-hj}.  From the control directions $\theta^*$, the optimal path $\gamma^*$ from $\bx_S$ to an arbitrary target position $\bx$ is obtained.

In order to derive the other PDEs in \eqref{eqn:det-hj}, we take a similar approach. As before, we use the Dynamic Programming Principle to break the path, now $\gamma^*$, into two parts.  However, in this case we use the choice $dS = \Delta t \cdot s_{1,max}(\theta^*,\bx,T_1)\be_{\theta^*}$
made in the derivation above, to arrive at  
\begin{equation}
        T_i(\bx,\bx_S,\gamma^*) \approx T_i(\bx-\Delta t \cdot (s_{i,max}(\theta^*,\bx,T_i)\be_{\theta^*}),\bx_S,\gamma^*) + \Delta t.
\end{equation}
By applying a Taylor Expansion to the right-hand side, we get
\begin{equation}
        T_i(\bx,\bx_S,\gamma^*) = T_i(\bx,\bx_S,\gamma^*) - \Delta t \cdot \nabla T_i(\bx,\bx_S,\gamma^*) \cdot (s_{i,max}(\theta^*,\bx,T_i)\be_{\theta^*}) + \Delta t.
\end{equation}
Subtracting $T_i(\bx,\bx_S,\gamma^*)$ from each side results in
\begin{equation}
        0 = - \Delta t \cdot \nabla T_i(\bx,\bx_S,\gamma^*) \cdot (s_{i,max}(\theta^*,\bx,T_i)\be_{\theta^*}) + \Delta t.
\end{equation}
Again, division by $\Delta t$ results in
    \begin{equation}
        0 =- \nabla T_i(\bx,\bx_S,\gamma^*) \cdot (s_{i,max}(\theta^*,\bx,T_i)\be_{\theta^*}) + 1.
    \end{equation}
Finally, we can rewrite the above equation to yield \eqref{eqn:det-hj}:
    \begin{equation}
        1 = \nabla T_i(\bx,\bx_S,\gamma^*) \cdot (s_{i,max}(\theta^*,\bx,T_i)\be_{\theta^*}).
    \end{equation}
\end{proof}

\begin{remark}
    Notice that when $p(1)=1$ and $p(j)=0$ for $j=2,3,...,n$, this system of Hamilton-Jacobi PDEs simplifies to the deterministic reachability time PDE as presented in~\cite{Brandman2023}.
\end{remark}

\begin{remark}

    This derivation considers $dS=\Delta t \cdot s_1(\theta,\bx,T_1)\be_\theta$ to derive \eqref{eqn:unc-hj}.  While the derivation appears to prioritize $s_1(\theta,\bx,T_1)$ over the other ocean models, this turns out to not be the case due to $\sum_{i=1}^{n} p(i) \frac{s_1 (\theta, \bx, T_1)}{s_i (\theta, \bx, T_i)}$ acting as a corrective term in the PDE for the other ocean models. 
    In fact, we can derive a different PDE to place in the Hamilton-Jacobi PDE System.  For $j=1,2,...,n$, we could consider $dS=\Delta t \cdot v_j(\theta,\bx,T_j)\be_\theta$ yielding in the derivation the following PDE:
    \begin{equation*}
        \min_{\theta \in  [0, 2\pi)} \{ -\nabla u \cdot s_{j,max} (\theta, \bx, T_j) \be_\theta + \sum_{i=1}^{n} p(i) \frac{s_{j,max} (\theta, \bx, T_1)}{s_{i,max} (\theta, \bx, T_i)} \} = 0.
    \end{equation*}
    The resulting PDE systems, in numerical practice, seem to yield approximately equivalent solutions.  
\end{remark}

\subsection{Full Path Planning with Uncertainty Governing System}

Finally, we state the entire
governing system that we consider in this paper by summarizing \eqref{eqn:speed}, \eqref{eq:ODErewritten}, \eqref{eq:ODE_ic_rewritten}, \eqref{eqn:unc-hj}, \eqref{eqn:det-hj}, and \eqref{eqn:bc-hj}. With the given assumptions \eqref{eqn:ode_condition} and definitions \eqref{eqn:si_governing}-\eqref{eqn:theta_governing} derived in Section \ref{subsec:max} 
\begin{subequations}
\begin{align}
& s_{i,max}(\theta,\bx,T_i) = (\bv_{c,i}(\bx,T_i) \cdot \be_\theta) + \sqrt{s^2_{max} - (\bv_{c,i}(\bx,T_i) \cdot \be_\theta^\perp)^2} \label{eqn:si_governing}\\
& \be_\theta = [\cos(\theta),\sin(\theta)]^\top, \quad \theta^* = \text{Hamiltonian minimizer},
\label{eqn:theta_governing}
\end{align}
\end{subequations}
we aim to seek first the solution $u(\bx,\bx_S)$ with accompanying Hamiltonian minimizer $\theta^*$ and $T_i(\bx,\bx_S,\gamma^*)$ that satisfies the Hamilton Jacobi PDE system described in Section \ref{subsec:HJ} by solving \eqref{eqn:hj_unc_governing}-\eqref{eqn:hj_ic_governing}:  
\begin{subequations}\label{eqn:HJ}
\begin{align}
  & \min_{\theta \in  [0, 2\pi)} \{ -\nabla u \cdot s_{1,max} (\theta, \bx, T_1) \be_\theta + \sum_{i=1}^{n} p(i) \frac{s_{1,max} (\theta, \bx, T_1)}{s_{i,max} (\theta, \bx, T_i)} \} = 0 \label{eqn:hj_unc_governing}\\
    & \nabla T_i(\bx,\bx_S,\gamma^*) \cdot s_{i,max}(\theta^*,\bx, T_i)\be_{\theta^*} = 1 , \quad \text{for } i=1,...,n \label{eqn:hj_det_governing}\\
    & u(\bx_S,\bx_S) = 0 , \quad T_i(\bx_S,\bx_S,\gamma^*) = 0, \quad \text{for } i=1,...,n \label{eqn:hj_ic_governing}
\end{align}
\end{subequations}

The geometric path $\gamma^*$ connecting $\bx_S$ to a now fixed $\bx_E$ is completely determined by the optimal control $\theta^*(\bx,t)$ and the mean reachability time along $\gamma^*$ to $\bx_E$ is denoted $T^*$.  In order to find the parameterization $\by_i(t)$ of $\gamma^*$ according to which the vehicle would traverse $\gamma^*$ in the presence of the  $i$-th ensemble member $\bv_{c,i}$, we solve the following ODE backwards in time:

\begin{subequations}\label{eqn:ODE}

\begin{align}
    & \frac{d\by_i}{dt} = s_{i,max}(\theta^*,\by_i,t)\cdot \be_{\theta^*}(\by_i,t), \label{eqn:ode_governing}\\ 
    &\by_i(T_i^*)=\bx_E, \label{eqn:optimal_control} \\
    &T_i^*=T_i(\bx_E, \bx_S,\gamma^*).
    \label{eqn:optimal_control_2}
\end{align}

\end{subequations}

\section{Numerical Approximation}
\label{sec:3}

In this section, we present a numerical method to solve the Uncertainty Governing System \eqref{eqn:hj_unc_governing}-\eqref{eqn:hj_ic_governing} derived in the previous section. We start, in Section \ref{sec:LFFS_Intro}, with a discussion of the pre-existing Lax-Friedrich's Fast Sweeping~(LFFS) scheme for solving a single time-independent Hamilton-Jacobi PDE~\cite{Kao2004, Zhao2004}.  Given the necessity of solving multiple Hamilton-Jacobi PDEs, we aim in this section to novelly extend the LFFS scheme to solve Hamilton-Jacobi equations simultaneously.  Following this, Section \ref{sec:LFFS_ext} presents an initial generalization of fast sweeping to systems of Hamilton-Jacobi equations. 
For reasons that we explain below, this first attempt at generalization turns out to be inefficient; as a result, Section \ref{sec:LFFS_mod} proposes a more efficient extension of the LFFS scheme to systems of Hamilton-Jacobi PDEs.  Finally, we explain in Section \ref{sec:HJ_path} how to identify the optimal path $\gamma^*$ by incorporating the optimal control, determined by \eqref{eqn:hj_unc_governing}, into the ODE system \eqref{eqn:ode_governing}-\eqref{eqn:optimal_control_2}.

\subsection{Lax-Friedrich's Fast Sweeping}
\label{sec:LFFS_Intro}
Hamilton-Jacobi PDEs are difficult to solve numerically due to their nonlinearity, non-uniqueness, and potential non-differentiability~\cite{Crandall1983,Crandall1986}.  In particular, solutions of Hamilton-Jacobi equations are known to develop kinks: solutions of the eikonal equation quickly demonstrate this.  

Theoretically, unique solutions of Hamilton-Jacobi PDEs can be identified using the viscosity solution framework; this approach introduces an artificial viscosity (i.e. smoothing term) $\eta > 0$ into the regularized Hamiltonian~\cite{Evans2010}:
\begin{equation}
    \tilde{H}(\bx,\tilde{u},\nabla\tilde{u},\Delta\tilde{u}):=H(\bx,\tilde{u},\nabla\tilde{u}) - \eta \Delta \tilde{u} = 0.
\end{equation}
From this new Hamiltonian, we can define a unique viscosity solution as the solution $u$ obtained in the limit of vanishing viscosity:
\begin{equation}
    u(\bx,\bx_S) = \lim_{\eta \rightarrow 0} \tilde{u}(\bx,\bx_S).
\end{equation}
The vanishing viscosity framework is extremely valuable, as it provides a remarkable link between solutions of Hamilton-Jacobi PDE and value functions arising in optimal control \cite{Evans2010}.

In practice, the vanishing viscosity framework suggests that robust numerical methods for solving Hamilton-Jacobi PDE can be obtained through the solution of regularized problems in which sufficient numerical viscosity is added.  The challenge in designing such methods is to minimize the amount of diffusion introduced (to avoid smearing) while maintaining stability of the scheme.

In this paper, we focus on the formerly developed Lax-Friedrich's Fast Sweeping (LFFS) method~\cite{Kao2004, Zhao2004}.  This method is attractive since it can easily handle the nonlinear Hamiltonians present in path planning; in addition, it is computationally efficient (requiring only the setup and solve of a linear system at each iteration)~\cite{Chen2013}.  Furthermore, under a single Hamilton-Jacobi PDE, key properties of the viscosity solution attained from LFFS, including monotonic convergence, have been proven~\cite{Zhao2004}.

The starting point for LFFS is the Lax-Friedrichs discretization of a static regularized Hamilton-Jacobi PDE:

\begin{equation}
    H(\bx,u,\eta) - \eta \Delta u = R(\bx).
\end{equation}

This results in a nonlinear system of algebraic equations that must be solved in order to determine the solution of the PDE at each degree of freedom.  

For simplicity, the presentation here focuses on two spatial dimensions.  Let there be $m_1 \times m_2$ interior domain nodes.  Denote nodes $x_{j_1,j_2}$ where $j_1=0,1,...,m_1+1$ and $j_2=0,1,...,m_2+1$ with uniform cell size $\Delta x_1$ in the horizontal direction and $\Delta x_2$ in the vertical direction.  See Figure \ref{fig:2D Grid} for reference.
%%%%
\begin{figure}[!h]
\centering
\begin{tikzpicture}[scale=0.5]
  \def\nInt{8}    
  \def\mInt{6} 
  \pgfmathtruncatemacro{\nxone}{\nInt+1}
  \pgfmathtruncatemacro{\nyone}{\mInt+1}

  \draw[thick] (0,0) rectangle (\nxone,\nyone);

  \foreach \i in {0,...,\nxone}{
    \foreach \j in {0,...,\nyone}{
      \pgfmathtruncatemacro{\onB}{or(
         or(\i==0,\i==\nxone),
         or(\j==0,\j==\nyone)
      )}
      \ifnum\onB=1
        \fill[red] (\i,\j) circle (2.5pt);
      \else
        \fill[blue] (\i,\j) circle (2.5pt);
      \fi
    }
  }

  \coordinate (V1) at (3,3);
  \coordinate (V2) at (3,4);
  \draw (V1)--(V2);
  \node[right]  at (2.8,3.5)         {$\Delta x_2$};

  \coordinate (V3) at (2,3);
  \draw (V1)--(V3);
  \node[below]  at (2.55,3.1)         {$\Delta x_1$};

  \node[below left]  at (0,0)         {$x_{0,0}$};
  \node[below right] at (\nxone,0)     {$x_{m_1+1,0}$};
  \node[above left]  at (0,\nyone)     {$x_{0,m_2+1}$};
  \node[above right] at (\nxone,\nyone) {$x_{m_1+1,m_2+1}$};
\end{tikzpicture}
     \caption{An example of two dimensional grid setup}
    \label{fig:2D Grid}
\end{figure}
%%%%
\begin{figure}[!h]
    \centering
\begin{tikzpicture}[scale=0.5]
  \def\nInt{8}
  \def\mInt{6}
  \pgfmathtruncatemacro{\nxone}{\nInt+1}
  \pgfmathtruncatemacro{\nyone}{\mInt+1}
  \def\dx{0.3}
  \def\dy{0.3}

  \draw[thick] (0,0) rectangle (\nxone,\nyone);
  \node at (4.5,-1.5) {\footnotesize Sweep 1};
  \foreach \i in {0,...,\nxone}{
    \foreach \j in {0,...,\nyone}{
      \pgfmathtruncatemacro{\onB}{or(or(\i==0,\i==\nxone), or(\j==0,\j==\nyone))}
      \ifnum\onB=1
        \fill[red] (\i,\j) circle (2.5pt);
      \else
        \fill[blue] (\i,\j) circle (2.5pt);
      \fi
    }
  }
  \foreach \i in {1,...,\nInt}{
    \foreach \j in {1,...,\mInt}{
      \draw[->,gray] (\i-1+0.35,\j-1+0.35) -- ++(\dx,\dy);
    }
  }

  \draw[thick,shift={(14,0)}] (0,0) rectangle (\nxone,\nyone);
  \node at (18.5,-1.5) {\footnotesize Sweep 2};
  \foreach \i in {0,...,\nxone}{
    \foreach \j in {0,...,\nyone}{
      \pgfmathtruncatemacro{\onB}{or(or(\i==0,\i==\nxone), or(\j==0,\j==\nyone))}
      \ifnum\onB=1
        \fill[red,shift={(14,0)}] (\i,\j) circle (2.5pt);
      \else
        \fill[blue,shift={(14,0)}] (\i,\j) circle (2.5pt);
      \fi
    }
  }
  \foreach \i in {1,...,\nInt}{
    \foreach \j in {1,...,\mInt}{
      \draw[->,gray] (14+\nxone-\i+0.35,\j-1+0.35) -- ++(-\dx,\dy);
    }
  }

  \draw[thick,shift={(0,-10)}] (0,0) rectangle (\nxone,\nyone);
  \node at (4.5,-11.5) {\footnotesize Sweep 3};
  \foreach \i in {0,...,\nxone}{
    \foreach \j in {0,...,\nyone}{
      \pgfmathtruncatemacro{\onB}{or(or(\i==0,\i==\nxone), or(\j==0,\j==\nyone))}
      \ifnum\onB=1
        \fill[red,shift={(0,-10)}] (\i,\j) circle (2.5pt);
      \else
        \fill[blue,shift={(0,-10)}] (\i,\j) circle (2.5pt);
      \fi
    }
  }
  \foreach \i in {1,...,\nInt}{
    \foreach \j in {1,...,\mInt}{
      \draw[->,gray] (\i-1+0.35,\nyone-\j-10+0.35) -- ++(\dx,-\dy);
    }
  }

  \draw[thick,shift={(14,-10)}] (0,0) rectangle (\nxone,\nyone);
  \node at (18.5,-11.5) {\footnotesize Sweep 4};
  \foreach \i in {0,...,\nxone}{
    \foreach \j in {0,...,\nyone}{
      \pgfmathtruncatemacro{\onB}{or(or(\i==0,\i==\nxone), or(\j==0,\j==\nyone))}
      \ifnum\onB=1
        \fill[red,shift={(14,-10)}] (\i,\j) circle (2.5pt);
      \else
        \fill[blue,shift={(14,-10)}] (\i,\j) circle (2.5pt);
      \fi
    }
  }
  \foreach \i in {1,...,\nInt}{
    \foreach \j in {1,...,\mInt}{
      \draw[->,gray] (14+\nxone-\i+0.35,\nyone-\j-10+0.35) -- ++(-\dx,-\dy);
    }
  }

\end{tikzpicture}
    \caption{A single set of sweeps on a given two dimensional  domain}
    \label{fig:Set_Of_Sweeps}
\end{figure}

Fast sweeping solves the nonlinear system arising from the Lax-Friedrichs discretization one unknown at a time.  This nonlinear system is nontrivial to solve, as it is generally nondifferentiable and potentially nonconvex.  The original work on fast sweeping focused on a Gauss-Seidel inspired scheme for solving the nonlinear system arising from an upwind discretization of a Hamilton-Jacobi PDE \cite{Rouy1992, Boue1999, Tsai2003}.  Here, we take a different approach and utilize the Lax-Friedrichs discretization \cite{Zhao2004, Kao2005}.  The resulting algorithm is simple and avoids the complicated upwinding calculations that would arise if we tried to solve our system using the original approach to fast sweeping.

Specifically, at each iteration, fast sweeping performs finite difference-based sweeps across the domain in all four directions (as shown in Figure \ref{fig:Set_Of_Sweeps}).  Updates to function values are only accepted if they result in values that are smaller than the previous iterate; this ensures that information propagates along characteristics in a manner consistent with the viscosity solution framework.  The method starts with a large constant initial guess.

We denote $\alpha=1,2,3, \text{ and } 4$ as the sweep index in a single set of (four) sweeps. In addition, $k$ is the LFFS iteration number corresponding to the number of sets of sweeps carried out.  We define $u^{k,\alpha}_j$ as the numerical approximation of $u$ at spatial coordinate $j=(j_1,j_2)$ on the corresponding $\alpha$ sweep in the sweep set for the $k$-th LFFS iteration.  Additionally, we denote $u^{(k)}$ as the matrix of all $u^{k,4}_j$ throughout the computational domain.  Note that the same discretization and notations for $u$ are utilized for $T_i$.

The Lax-Friedrich's discretization, as seen in \cite{Kao2004}, gives the following LFFS update equation for the $k$-th set of sweeps.
\begin{align}
    u_{j_1,j_2}^{k,\alpha} = c \cdot
    \left(R(x_{j_1,j_2}) - H \left(x_{j_1,j_2}, \frac{u_{j_1+1,j_2}^{k,\alpha-1} - u_{j_1-1,j_2}^{k,\alpha-1}}{2 \Delta x_1},\frac{u_{j_1,j_2+1}^{k,\alpha-1} - u_{j_1,j_2-1}^{k,\alpha-1}}{2 \Delta x_2} \right) \right) \nonumber \\ + c \cdot \left( \eta_{x_1}\frac{u_{j_1+1,j_2}^{k,\alpha-1} + u_{j_1-1,j_2}^{k,\alpha-1}}{2 \Delta x_1} +\eta_{x_2}\frac{u_{j_1,j_2+1}^{k,\alpha-1} + u_{j_1,j_2-1}^{k,\alpha-1}}{2 \Delta x_2} \right),
    \label{eqn:LFFS_interior}
\end{align}
where $c = 1 / \left({\frac{\eta_{x_1}}{\Delta x_1}+\frac{\eta_{x_2}}{\Delta x_2}}\right)$ and $\eta=(\eta_{x_1},\eta_{x_2})$.

In addition, we enforce the following boundary conditions in the two dimensional case after each sweeping direction, as again given in \cite{Kao2004}, to ensure boundary information flows outward:

\begin{align}
    &u_{0,j_2}^{k,\alpha} = \min(\max(2u_{1,j_2}^{k,\alpha} - u_{2,j_2}^{k,\alpha}, u_{2,j_2}^{k,\alpha}), u_{0,j_2}^{k,\alpha-1}), \nonumber \\
    &u_{m+1,j_2}^{k,\alpha} = \min(\max(2u_{m_1,j_2}^{k,\alpha} - u_{m_1-1,j_2}^{k,\alpha}, u_{m_1-1,j_2}^{k,\alpha}), u_{m_1+1,j_2}^{k,\alpha-1}), \nonumber \\
    &u_{j_1,0}^{k,\alpha} = \min(\max(2u_{j_1,1}^{k,\alpha} - u_{j_1,2}^{k,\alpha}, u_{j_1,2}^{k,\alpha}), u_{j_1,0}^{k,\alpha-1}), \nonumber \\
    &u_{j_1,m+1}^{k,\alpha} = \min(\max(2u_{j_1,m_2}^{k,\alpha} - u_{j_1,m_2-1}^{k,\alpha}, u_{j_1,m_2-1}^{k,\alpha}), u_{j_1,m_2+1}^{k,\alpha-1}).
    \label{eqn:LFFS_exterior}
\end{align}

 In a two dimensional setup, Algorithm \ref{alg:LFFS_Sweeps} illustrates how we carry out the following single set of sweeping directions under LFFS. Note that $u$ is being overwritten after each sweep in the sweep set.  This allows for storage efficiency.

 \begin{algorithm} [H]
\scriptsize
\caption{$k$ Sets of 2D LFFS Sweeps}\label{alg:LFFS_Sweeps}
\begin{algorithmic}

\State {For each iteration $k = 0,1, \cdots$}

\State Sweep 1: Solve $u_{j_1,j_2}^{k,1}$ using \eqref{eqn:LFFS_interior} for $j_1=1:m_1$, $j_2=1:m_2$
\State Assert boundary conditions \eqref{eqn:LFFS_exterior}
\State Sweep 2: Solve $u_{j_1,j_2}^{k,2}$ using \eqref{eqn:LFFS_interior} for $j_1=m_1:1$, $j_2=1:m_2$
\State Assert boundary conditions \eqref{eqn:LFFS_exterior}
\State Sweep 3: Solve $u_{j_1,j_2}^{k,3}$ using \eqref{eqn:LFFS_interior} for $j_1=1:m_1$, $j_2=m_2:1$
\State Assert boundary conditions \eqref{eqn:LFFS_exterior}
\State Sweep 4: Solve $u_{j_1,j_2}^{k,4}$ using \eqref{eqn:LFFS_interior} for $j_1=m_1:1$, $j_2=m_2:1$
\State Assert boundary conditions \eqref{eqn:LFFS_exterior}
\State $u^{k+1,0} = u^{k,4}$ %{\color{red}is this true? or  $u^{k} = u^{k,4}$ }

\end{algorithmic}
\end{algorithm}

For convergence, we iterate over a set of sweeps in all directions until an error threshold $\epsilon$ is met under the Frobenius norm $||\cdot||_F$:
$$
    || u^{(k+1)} - u^{(k)} ||_F \leq \epsilon.
$$
In this paper, we set $\epsilon= 10^{-4}$.

\subsection{Extension to Systems of Equations: Sweep Until Convergence Alternating Scheme}
\label{sec:LFFS_ext}

LFFS was previously used to compute the reachability time in the deterministic ocean model case~\cite{Brandman2023}.  However, in the present context we have a system of Hamilton-Jacobi PDEs to solve.  Adapting LFFS to such a system requires sweeping, in some manner, over the entire system of PDEs.  
Below, we present a straightforward, but inefficient, generalization of LFFS to systems of Hamilton-Jacobi equations.  Section \ref{sec:LFFS_mod} presents another approach that we demonstrate is more efficient.

A crude generalization of fast sweeping to systems is to first sweep one of the PDEs to convergence while keeping the others fixed.  Following this, we sweep a second PDE in the system to convergence while keeping the remaining variables fixed.  Iterating in this manner eventually results in each PDE being swept to convergence.

Once this is accomplished, we check if
$$
\max \{||u^{(1)}-u^{(0)}||_F,{||T_{i}^{(1)}-T_{i}^{(0)}||_F} \}<\epsilon.
$$
If this condition is met, we terminate the LFFS Alternations and have our solutions $u$ and $T_i$.  If the $\epsilon$-threshold is exceeded, we repeat the process described above.  This Sweep Until Convergence Alternating (SUCA-LFFS) approach is presented in Algorithm $\ref{alg:UntilConvSweepLFFS}$.

\begin{algorithm} [!h]
\scriptsize
\caption{Path Planning with Uncertainty - Sweep Until Convergence Alternating LFFS Scheme (SUCA-LFFS)}\label{alg:UntilConvSweepLFFS}
\begin{algorithmic}
\State 1. Initialize ${\bv_{c,i}}$, $s_{max}$, $\bx_S$, $u^{(0)}$ and ${T_i^{(0)}}$ where ${u(\bx_S,\bx_S)=T_i(\bx_S,\bx_S,\gamma)=0}$ on $\Omega$ for $i=1,...,n$
\State 2. Loop over the set of sweeps from LFFS:
\While{$\max \{||u^{(1)}-u^{(0)}||_F,{||T_{i}^{(1)}-T_{i}^{(0)}||_F} \}>\epsilon$}
\For{{each $b$ in $[u, T_i]$}}
\While {{$||b^{(k)}-b^{(k-1)}||_F >\epsilon$}} 
\For{each of the 4 sweeping directions}
\State LFFS on {\eqref{eqn:hj_unc_governing} with Newton's Method or \eqref{eqn:hj_det_governing} to update $u$ or $T_i$}
\State Enforce \eqref{eqn:LFFS_exterior}
\EndFor
\EndWhile
\State  Reset $k$ to 1
\EndFor
\EndWhile
\State 3. Pick $\bx_E$
\State 4. Forward Euler on \eqref{eqn:ode_governing} to obtain optimal path from $\bx_S$ to $\bx_E$
\end{algorithmic}
\end{algorithm}

\subsection{Modification of the Scheme: Single Sweep Set Alternating LFFS Scheme}

\label{sec:LFFS_mod}
The scheme presented in Section \ref{sec:LFFS_ext} is inefficient: sequentially solving each variable to convergence results in unnecessary overfitting.  In this section, we present an alternative approach.  

Rather than iteratively sweep each PDE to convergence, we propose that, at each sweep, the entire family of PDEs is updated.  We continue sweeping until we reach the termination condition

\begin{align*}
\max \{||u^{(k)}-u^{(k-1)}||_F,{||T_{i}^{(k)}-T_{i}^{(k-1)}||_F} \}<\epsilon.
\end{align*}

This Single Sweep Set Alternating (SSSA-LFFS) algorithm is presented in Algorithm \ref{alg:1SweepLFFS}.

This approach, in which the unknowns are swept simultaneously, is natural when fast sweeping is seen as a nonlinear analogue of Gauss-Seidel.  The resulting method avoids the overfitting mentioned previously; numerical results presented in Section \ref{sec:4} confirm that this yields significant efficiency gains.

\begin{algorithm} [!h]
\scriptsize
\caption{Path Planning with Uncertainty - Single Sweep Set Alternating LFFS Scheme (SSSA-LFFS)}\label{alg:1SweepLFFS}
\begin{algorithmic}
\State 1. Initialize ${\bv_{c,i}}$, $s_{max}$, $\bx_S$, $u^{(0)}$ and ${T_i^{(0)}}$ where ${u(\bx_S,\bx_S)=T_i(\bx_S,\bx_S,\gamma)=0}$ on $\Omega$ for $i=1,...,n$
\State 2. Loop over the set of sweeps from LFFS: 
\While{$\max \{||u^{(k)}-u^{(k-1)}||_F,{||T_{i}^{(k)}-T_{i}^{(k-1)}||_F} \}>\epsilon$}
\For{{each $b$ in $[u, T_i]$}}
\For{each of the 4 sweeping directions}
\State LFFS on {\eqref{eqn:hj_unc_governing} with Newton's Method or \eqref{eqn:hj_det_governing} to update $u$ or $T_i$}
\State Enforce \eqref{eqn:LFFS_exterior}
\EndFor
\EndFor

\EndWhile
\State 3. Pick $\bx_E$
\State 4. Forward Euler on \eqref{eqn:ode_governing} to obtain optimal path from $\bx_S$ to $\bx_E$
\end{algorithmic}
\end{algorithm}

\subsection{Hamiltonian Minimization Problem}
\label{sec:Ham_min}
In order to perform fast sweeping on the Hamilton-Jacobi system \eqref{eqn:hj_unc_governing}-\eqref{eqn:hj_ic_governing}, we need a robust and efficient means of computing the Hamiltonian present in \eqref{eqn:hj_unc_governing}.  We used Newton's method for this task due to its quadratic convergence for a suitable initial guess.

When only allowing $\theta$ to vary, under our admissible conditions, this Hamiltonian expression is a smooth nonlinear function that occurs over a $2 \pi$ period.  The function has a well-defined global minimum in all test cases included in this paper.  Due to the $2 \pi$ period and local convexity, we can split this problem into a small number of locally convex regions.  In practice, we noticed under our admissible conditions that this was no more than three locally convex regions.  Therefore, we consider three different initial guesses $\pi/2$, $\pi$, and $3\pi/2$, evaluate the Hamiltonian with them, and select the smaller of the three.  Under the Hamiltonian function structure, this guaranteed us in finding the true Hamiltonian minimizer.

To enhance the method's efficiency, we utilized the following strategy to reuse previously found optimal $\theta^*$.  For the first iteration of LFFS, we make an arbitrary guess as previously outlined.  However, for future iterations, we save the $\theta^*$ from the current iteration and use this value {\color{violet} in place of $\pi$} for the initial guess strategy for the next iteration, after the $T_i$ functions have been updated.

%\subsection{Constructing Optimal Path}
\subsection{Constructing the Optimal Path}
\label{sec:HJ_path}

The quantities $u, T_i,$ and $\theta^*$ suffice to solve the ODE \eqref{eqn:ode_governing}-   \eqref{eqn:optimal_control_2} for the optimal path $\gamma^*$.  In order to solve this ODE numerically, we employ Forward Euler backwards in time (since our ODE prescribes an end-time condition).

\subsection{Overall Algorithm }

Figure \ref{fig:overall_algorithm} illustrates the overall algorithm for the proposed method. As input, we provide the ocean‐current models \(
\bv_{c,i}\), the maximum speed \(s_{\max}\), and the starting location \(\bx_S\). Next, we initialize
$u^{(0)}$ and  $T_i^{(0)}$
as
$({2}/{s_{max}})\,\bigl\lvert \bx - \bx_S\bigr\rvert.$

We then apply our Single Sweep Set Alternating LFFS scheme to
\eqref{eqn:hj_unc_governing} and \eqref{eqn:hj_det_governing} as described in Section \ref{sec:LFFS_mod}. Once the convergence threshold is met, we store \(u\), \(T_i\), and \(\theta^*\). Finally, we solve
\eqref{eqn:ode_governing}
for the optimal path \(\by(t)\) over \([0,T^*]\), from \(\bx_S\) to \(\bx_E\), using a backward‐in‐time Forward Euler method.

\begin{figure}[!h]
\centering
\begin{tikzpicture}[scale=0.8, transform shape,
    every node/.style={font=\small},
    node distance = 5mm and 12mm,
    arr/.style = {-Triangle,very thick},
    box/.style = {rectangle, draw, semithick,
                     minimum height=4mm, minimum width=7mm,
                     fill=white, drop shadow}
]

\node (n1) [box, align=center] {Input};

\node (n2) [box, align=center, right=of n1] {1 Set of LFFS Sweeps\\ HJ PDE \eqref{eqn:hj_unc_governing}};
\node (n3) [box, align=center, below=of n2] {1 Set of LFFS Sweeps\\ Other HJ PDEs \eqref{eqn:hj_det_governing}};
\node (n4) [box, align=center, below=of n3] {LFFS Convergence \\Threshold Met?};

\node (n5) [box, align=center, right=of n2] {$u(\bx,\bx_S),T_i(\bx,\bx_S,\gamma^*),\theta^*(\bx,t)$};

\node (n6) [box, align=center, right=of n5] {Solve \eqref{eqn:ODE} for $\by(t)$};
\node (n7) [box, align=center, below=of n6] {Output: \\Optimal Path $\by(t)$ \\ over $[0,T^*]$};

\node[align=center] 
  at ($([xshift=0mm, yshift=-45mm]n4.south)!0.5!(n2.south)$) 
  {\color{red} LFFS Alternating Scheme};

\draw[arr]   (n1) -- (n2.west);
\draw[arr]   (n2.south) -| (n3.north);
\draw[arr]   (n3.south) -| (n4.north);
\draw[arr]   (n4.west) -| ([xshift= -3mm] n2.west) node[midway,above left] {No} ;
\draw[arr]   (n4) -| (n5) node[midway, above left] {Yes};
\draw[arr]   (n5) -- (n6);
\draw[arr]   (n6) -- (n7);

\draw[red,thick,dotted] ([xshift= -10mm, yshift= 5mm] n2.north west)rectangle ([xshift= 10mm, yshift= -5mm] n4.south east);

\end{tikzpicture}
\caption{Illustration of the overall algorithm}
\label{fig:overall_algorithm}
\end{figure}

\section{Numerical Examples}
\label{sec:4}

This section presents several numerical examples that illustrate the validity and capabilities of the proposed path planning framework. Section \ref{sec:num_conv} describes an example in which the method converges to the expected semi-analytic solution.  Section \ref{num_eff} compares the two alternating fast sweeping methods proposed in Section \ref{sec:3}; in particular, we demonstrate the efficiency of the approach presented in Section \ref{sec:LFFS_mod}. The remaining examples in Sections \ref{path_comp} and \ref{five_mem_ens}  highlight the algorithm's performance in a variety of scenarios including more complicated and larger ensemble sizes.  

All examples are conducted on a uniform mesh where $\bx=(x_1,x_2)$ is in kilometers(km) and $t$ is in $10^3$ seconds(s). We set $s_{max}=1\text{ m/s}$.  The implementation is done in MATLAB\textsuperscript{\textregistered}\footnote{MATLAB is a registered trademark of The MathWorks, Inc.}, based on the framework depicted in Figure~\ref{fig:overall_algorithm}.

\subsection{Example 1: Convergence Test}
\label{sec:num_conv}

The goal of our first example is to demonstrate convergence of the Alternating LFFS scheme to a known semi-analytical solution.  To do this, we consider an ensemble of two ocean models ($n=2$) in which the ensemble members are identical, equi-probable, and scale linearly in time.

In this example, the computational domain is defined as $\Omega = [0\text{ km},10\text{ km}] \times [0\text{ km},10\text{ km}]$; the starting and target positions are given as $\bx_S = (5\text{ km},5\text{ km})$, and $\bx_E = (1\text{ km},0.5\text{ km})$, respectively.  The two ensemble members are identical and defined as
$$
\bv_{c,1}(\bx,t)=\bv_{c,2}(\bx,t)=[0, 0.002t]^T, \quad \bx\in \Omega, \quad t \in (0,\infty)
$$
where $p(1)=p(2)=0.5$.  We set the artificial viscosity $\eta=(\eta_{x_1},\eta_{x_2})=(1.5,1.5)$.  This may seem large, $O(1)$, but note that the order of magnitude of $u$ is 3 orders larger, $O(10^3)$.

There exists a semi-analytical solution for the reachability time $T^*$ under a linear-in-time current; $T^*$ satisfies the nonlinear equation
\begin{equation*}
    u(\bx_E,\bx_S) = T^* = \frac{l}{\sqrt{s_{max}^2 - \frac{\left(T^* \tilde{v}_c^{(2)}\right)^2}{4} + T^* \tilde{v}_c^{(2)} \hat{v}_c^{(2)} - \left(\hat{v}_c^{(2)}\right)^2 } + \frac{T^* \tilde{v}_c^{(1)}}{2} + \hat{v}_c^{(1)}} = t \tilde{\bv}_c + \hat{\bv}_c,
    \label{eq:analytic_sol}
\end{equation*}
where $\tilde{\bv}_c=[\tilde{v}_c^{(1)},\tilde{v}_c^{(2)}]^T \in \mathbb{R}^2$ and $\hat{\bv}_c=[\hat{v}_c^{(1)},\hat{v}_c^{(2)}]^T \in \mathbb{R}^2$.  A detailed derivation of this analytical solution is provided in Appendix \href{sec:app}{A}.

\begin{figure}[t]
  \begin{minipage}[t]{0.48\textwidth}
   \vspace{0pt}%
    \centering
    \includegraphics[trim={0cm 0cm 2cm 0cm},clip, width=\textwidth]{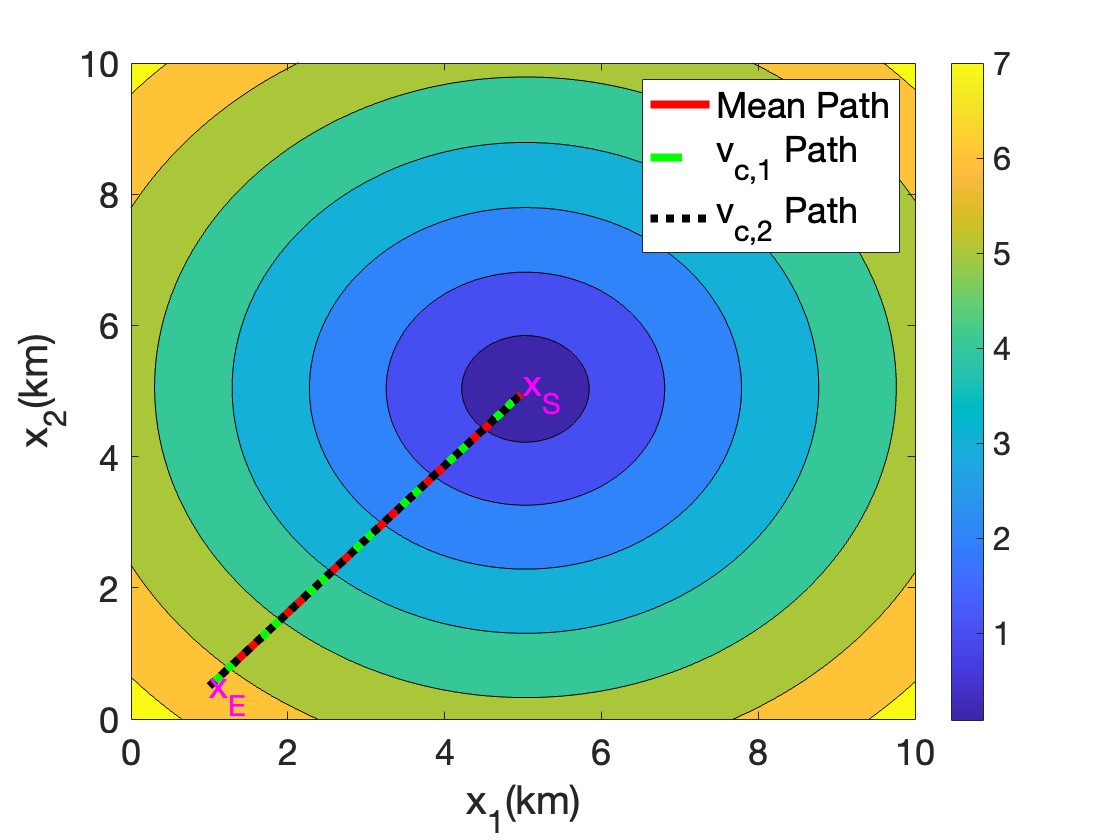}
  \end{minipage}
  \begin{minipage}[t]{0.48\textwidth}
    \centering
     \vspace{0pt}%
     \includegraphics[trim={0cm 0cm 2cm 0cm},clip, width=\textwidth]{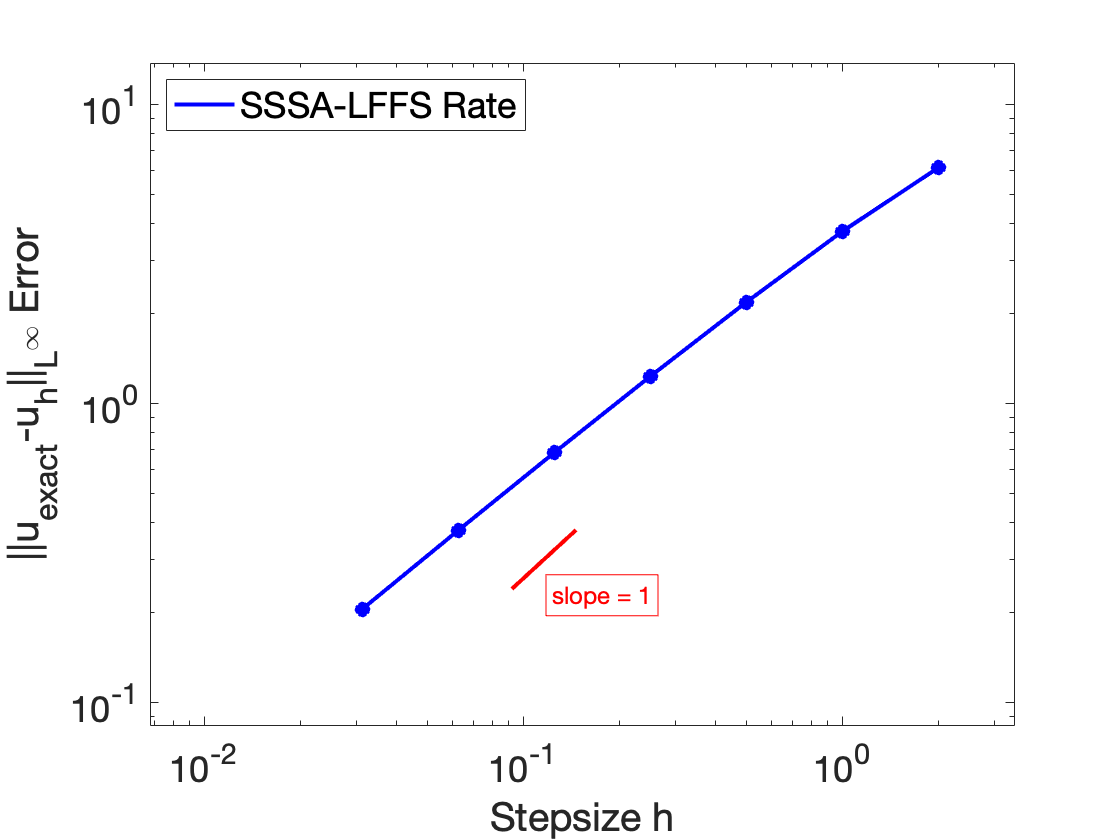}
  \end{minipage}
  \caption{
    Example 1: (left) Two deterministic paths (green and black) each with the same linear‑in‑time ocean current and the mean uncertainty path (red) from $\bx_S=(5\,$km$,5\,$km$)$ to $\bx_E=(1\,$km$,0.5\,$km$)$ plotted on the contour of $u$ in $10^3$ seconds for fixed mesh size; 
     (right) Illustration of the convergence rate for SSSA-LFFS. }
    \label{fig:lin_in_time}
\end{figure}

Our first step is to verify that, for a given mesh, the optimal path for the ensemble approach agrees with the path obtained using the deterministic framework~\cite{Brandman2023}.  To do this, we set the mesh spacing as $h=\Delta x_1 = \Delta x_2 = 0.125\text{ km}$ and ran the path planning algorithm.

The results, found in Figure \ref{fig:lin_in_time} (left), indicate agreement between the paths.  This validates that when the ensemble members are identical (zero uncertainty), then the mean uncertainty path coincides with the deterministic paths.  Specifically, it confirms that the uncertainty framework reduces to the deterministic framework if the ocean models $\bv_{c,i}$ are in perfect agreement.

We next conduct a convergence study by varying the node spacing $h$ and consequently the mesh size.  At each refinement step, we reduce $h$ by a factor of $2$ and apply the Single Sweep Set Alternating LFFS (SSSA-LFFS) Scheme to compute the approximate solution $u_h$ at the current mesh size.  For comparison, the exact solution $u_{\text{exact}}$ is obtained by solving for $T^*$ in \eqref{eq:analytic_sol} by Fixed Point Iteration for all points in the computational domain at the current mesh size.

Figure \ref{fig:lin_in_time} (right) shows that as the mesh is refined, there is a consistent decrease in error, indicating that $u_h$ is converging to $u_{\text{exact}}$.  Additionally, this result demonstrates a near-linear convergence rate in $L^\infty$ norm of the error for the Single Sweep Set Alternating LFFS scheme.  This is expected as the original LFFS scheme optimally has a linear convergence rate~\cite{Kao2004} and is known to degrade based on the advection term in the PDE~\cite{Fomel2009}.

\subsection{Example 2: Comparison of proposed fast sweeping methods: computational efficiency}
\label{num_eff}

The goal of this example is to compare, quantitatively, the computational efficiency of the Single Sweep Set Alternating (SSSA-LFFS) Scheme and Single Sweep Until Convergence Alternating (SSUCA-LFFS) Scheme, as detailed in Algorithm~\ref{alg:UntilConvSweepLFFS} and Algorithm~\ref{alg:1SweepLFFS}, respectively.  To do this, we consider the performance of each method on various two-member ocean current ensembles.  
The computational domain is defined as $\Omega = [-100\text{ km},100\text{ km}] \times [-100\text{ km},100\text{ km}]$, where the UUV starting position is given as $\bx_S = (0\text{ km},0\text{ km})$.

Below, we describe two different setups of ensemble members.  Setup 1 is designed with ensemble members that exhibit less nonlinearity, in general, in comparison to the possible members under Setup 2.  In both cases, we choose a 2-member ensemble from that setup, assuming  $p(1)=p(2)=0.5$, and discretize space using $161 \times 161$ grid points.  We set $\eta=(\eta_{x_1},\eta_{x_2})=(1.75,1.75)$.  This increase in the artificial viscosity magnitude is required as Setup 2 exhibits greater nonlinearity and more instability.

\begin{itemize}
    \item \textbf{Setup 1:} We test four different two-member ensemble configurations formed from two of the following time-dependent currents:
$$
\bv_a = [0, 0.0002t]^T, \quad 
\bv_b = [0, -0.0002t]^T, \quad 
\bv_c = [0.4\sin(tx_1), 0.4\cos(tx_2)]^T.
$$

    \item \textbf{Setup 2:}  
    We construct ensemble members as vortex ocean current models as described in~\cite{Brandman2023}.  This class of currents take the form: 
\begin{equation*}
\bv_{\text{vortex}}(\bx,t)=\frac{1}{\zeta(10^{-5}||\bw||_2+\frac{4}{3})||\bw||_2}*\alpha
\begin{bmatrix}
                    -w_{2} \\
                    w_{1}
\end{bmatrix},  \ 
\bw = \begin{bmatrix}
                    x_{1} - x_{1_S}- \beta t \\
                    x_{2} - x_{2_S}
                \end{bmatrix},
\label{eq:vortex}
\end{equation*}
    where $\alpha \in \{-1,1\}, \beta \in \mathbb{R},$ 
    and $\zeta \in \mathbb{R}_{\ne 0}$ are model parameters dictating the spinning direction, moving velocity, and strength of the vortex, respectively.  We test four different two-member ensemble configurations formed from two of the following vortex currents:

    \begin{itemize}
        \item $\bv_d$: counterclockwise right-moving vortex $(\alpha=1,\beta=1,\zeta=1)$
        \vspace{0.2cm}
        \item $\bv_e$: clockwise right-moving vortex $(\alpha=-1,\beta=1,\zeta=1)$
        \vspace{0.2cm}
        \item $\bv_f$: counterclockwise left-moving vortex $(\alpha=1,\beta=-1,\zeta=1)$
        \vspace{0.2cm}
        \item $\bv_g$: weaker counterclockwise right-moving vortex $(\alpha=1,\beta=1,\zeta=2)$
        \vspace{0.2cm}
    \end{itemize}
\end{itemize}

The results for both setups, summarized in Tables \ref{tab:2a} and \ref{tab:2b}, demonstrate that the Single Sweep Set Alternating method outperforms the Sweep Until Convergence Alternating method across all test cases.  The improved efficiency of the Single Sweep Set method is particularly pronounced in Setup 2, presumably due to the nonlinearity introduced by the presence of vortices~\cite{Miksis2022}.  Therefore, the Single Sweep Set Alternating Scheme appears as the more computationally efficient method, especially in the presence of greater nonlinearity.  

It is also worth noting that the total number of sweeps under the Sweep Until Convergence Alternating Scheme results in a non-uniform number of sweeps to update $u$, $T_1$, and $T_2$ while Single Sweep Set Alternating enforces a enforces a uniform amount of sweeps.  This is important to note as a single sweep to update $u$ takes a significant amount of time, due to the added minimization problem to solve, compared with a sweep to update any $T_i$.  This means that the total sweep sets of $u$ and $T_i$ combined required for system solution convergence under the Sweep Until Convergence Alternating Scheme does not linearly scale with time; whereas, Single Sweep Set Alternating, due to the uniformity, still does.  This lack of scaling is demonstrated when comparing the first and third test scenarios under Set 2.

\begin{table}[h!]
    \centering
                \begin{tabular}{|c|c|c|c|c|c|c|} \hline 

                \multicolumn{2}{|c|}{Ensemble} &
                \multicolumn{2}{|p{3.7cm}|}{\centering Sweep Until Convergence Alternating Method}
                &
                \multicolumn{2}{|p{3.7cm}|}{\centering Single Sweep Set Alternating Method}&
                \multicolumn{1}{|c|}{Speedup Factor}\\ \hline

                $\bv_{c,1}$ & $\bv_{c,2}$ & Total Sweep Sets & Time($s$) & Total Sweep Sets & Time($s$) &  \\ \hline

                $\bv_a$ & $\bv_a$ & 448 & 93.25 & 267 & 46.83 & 1.991\\ \hline

                $\bv_a$ & $\bv_b$ & 390 & 84.74 & 267 & 45.96 & 1.844\\ \hline

                $\bv_c$ & $\bv_c$ & 2651 & 362.43 & 441 & 99.51 & 3.642\\ \hline

                $\bv_a$ & $\bv_c$ & 1438 & 172.55 & 315 & 58.74 & 2.951\\ \hline

            \end{tabular}
    \caption{Example 2: Total number of sweep sets and computational time for 2 LFFS Alternating Schemes given 2 non-vortex ensemble members scenarios selected from Setup 1.}
    \label{tab:2a}
\end{table}

\begin{table}[h!]
    \centering
                \begin{tabular}{|c|c|c|c|c|c|c|} \hline 

                \multicolumn{2}{|c|}{Ensemble} &
                \multicolumn{2}{|p{3.7cm}|}{\centering Sweep Until Convergence Alternating Method}
                &
                \multicolumn{2}{|p{3.7cm}|}{\centering Single Sweep Set Alternating Method}&
                \multicolumn{1}{|p{2.2cm}|}{\centering Speedup Factor (Time)}\\ \hline

                $\bv_{c,1}$ & $\bv_{c,2}$ & Total Sweep Sets & Time($s$) & Total Sweep Sets & Time($s$) & \\ \hline

                $\bv_d$ & $\bv_d$ & 9562 & 2157.56 & 1473 & 322.01 & 6.700\\ \hline

                $\bv_d$ & $\bv_e$ & 4544 & 1119.49 & 1485 & 315.09 & 3.553\\ \hline

                $\bv_d$ & $\bv_f$ & 10463 & 1938.04 & 1854 & 391.18 & 4.954\\ \hline
                %10463

                $\bv_d$ & $\bv_g$ & 5692 & 1681.85 & 1380 & 301.69 & 5.575\\ \hline

            \end{tabular}
    \caption{Example 2: Total number of sweep sets and computational time for 2 LFFS Alternating Schemes given 2 moving vortex ensemble members scenarios.  Note that the solution convergence for the ensemble of $\bv_d$ and $\bv_f$ under Sweep Until Convergence Alternating Method takes more sweeps but less time than that of the ensemble  of $\bv_d$ and $\bv_d$ due the non-uniformity of the number of sweeps for $u$, $T_1$ and $T_2$ under the method and the $\bv_d$ and $\bv_d$ requiring more sweeps for $u$, which these are more computationally expensive compared with $T_i$ due to the minimization problem.}
    \label{tab:2b}
\end{table}

We further consider the ensemble of $\bv_a$ and $\bv_b$ from Setup 1 as we wish to show how the maximum error between successive sweep sets of $u$, $T_1$, and $T_2$ behave under each method.  Figure \ref{fig:SUCAvsSSSA} shows that sweeping until convergence produces sharp declines in the maximum error for one of the solutions in the system.  However, as mentioned previously this goes against the Gauss-Seidel sweeping nature of the algorithm resulting in an overcorrection as seen with much more pronounced spikes when we then try to solve for one of the other solutions in the system.  The more cautious SSSA-LFFS still sees this overcorrection, but the impact is significantly lower and resulted in the method requiring less sweep steps for all solutions to meet our required stopping criteria.

\begin{figure}[t!]
    \centering
    \includegraphics[scale = 0.2]{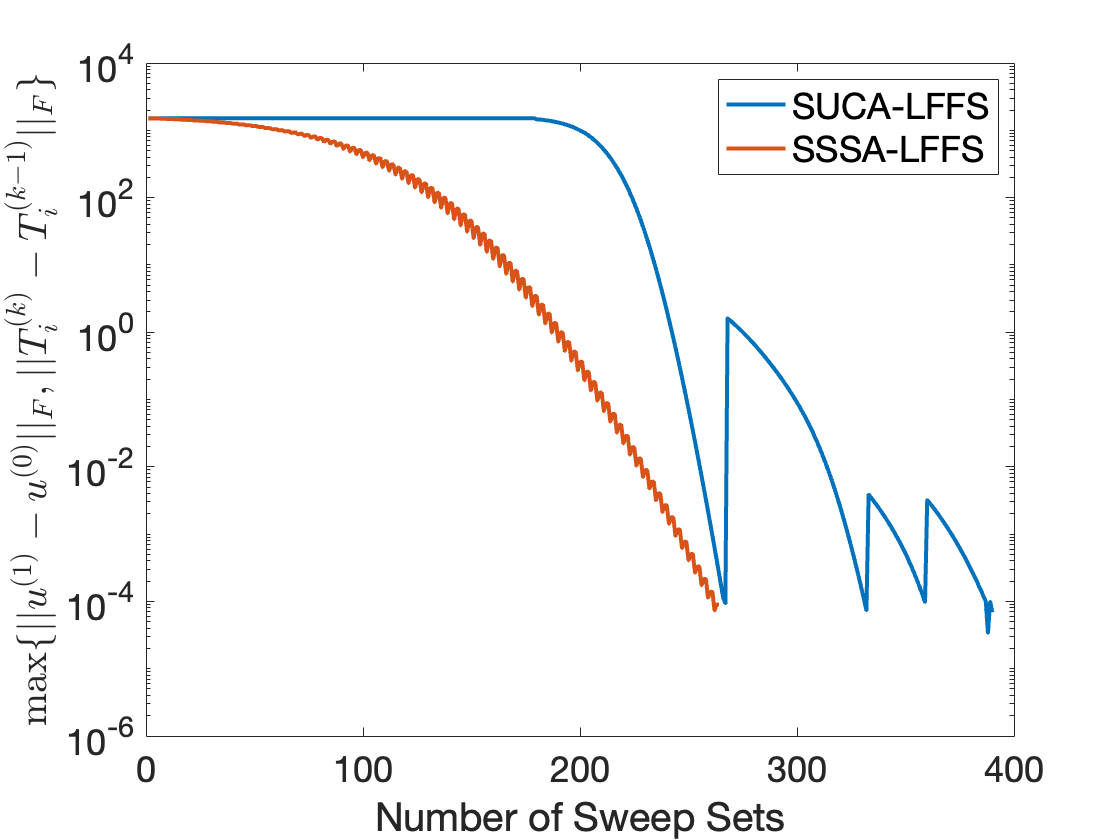}
    \caption{Example 2: The maximum error between successive sweep sets of $u$, $T_1$, and $T_2$ under Sweep Until Convergence Alternating Lax-Friedrich's Fast Sweeping (SUCA-LFFS) and Single Sweep Set Alternating Lax-Friedrich's Fast Sweeping (SSSA-LFFS) for the ensemble of $\bv_a$ and $\bv_b$ from Setup 1 until stopping criteria is achieved.}
    \label{fig:SUCAvsSSSA}
\end{figure}

\subsection{Example 3. Understanding the relationship between the optimal ensemble path and the optimal deterministic paths}
\label{path_comp}
The goal of this section is to understand, through two examples, the relationship between the path $\gamma^*$ that is time-optimal for the ensemble and the deterministic paths that are time-optimal for each of the ensemble members viewed in isolation.  

Our starting point is a two-member ensemble for which the individual reachability times, $T_1$ and $T_2$ as obtained from solving \eqref{eqn:hj_unc_governing}-\eqref{eqn:hj_ic_governing}, do not differ appreciably: $\frac{T_1}{T_2}\approx 1$.  
In this case, we show that the optimal mean-time path $\gamma^*$  is, roughly speaking, an interpolant of the optimal paths obtained by $T_1$ and $T_2$.

In contrast, we then consider a two-member ensemble for which $T_1$ and $T_2$ differ significantly: $\frac{T_1}{T_2} >> 1$ in at least part of the domain.
As a result, the optimal mean-time path $\gamma^*$ is not a straightforward interpolant of the optimal deterministic paths associated with the individual ensemble members.

\subsubsection{Example 3a:  $\frac{T_1}{T_2} \approx 1$}

For our first example, we choose ensemble members for which $T_1 \approx T_2$. The computational domain is defined as $\Omega = [0\text{ km},10\text{ km}] \times [0\text{ km},10\text{ km}]$; the starting and target positions are $\bx_S = (5\text{ km},5\text{ km})$ and $\bx_E = (1\text{ km},0.5\text{ km})$, respectively.  The numerical experiment is conducted on a $161 \times 161$ nodal mesh.  

The ocean current model ensemble members are defined as 
$$
\bv_{c,1}(\bx,t)=[0.002t]^T, \quad 
\bv_{c,2}(\bx,t)=[0.4\sin(tx_1), 0.4\cos(tx_2)]^T,
$$
where $p(1)=p(2)=0.5$.  We set $\eta=(\eta_{x_1},\eta_{x_2})=(1.5,1.5)$.

The optimal mean reachability time $u(\bx)$ and the deterministic reachability times $T_1(\bx)$ and $T_2(\bx)$ are visualized in Figure \ref{fig:3a_contours}.  The contours of $u$, exhibiting similarities with those of both $T_1$ and $T_2$, appear to be an interpolation between the two. Furthermore, Figure \ref{fig:low_vort} indicates that the optimal mean-time path $\gamma^*$ (red) lies between the two deterministic paths formed from $T_1$ (purple) and $T_2$ (black).

\begin{figure}[H]
     \centering
     \begin{subfigure}[b]{0.3\textwidth}
         \centering
         \includegraphics[width=\textwidth]{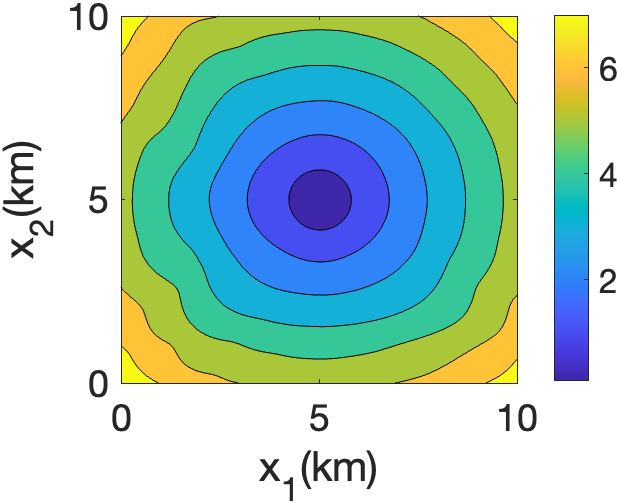}
         \caption{$u$}
         \label{fig:3a_u}
     \end{subfigure}
     \hfill
     \begin{subfigure}[b]{0.3\textwidth}
         \centering
         \includegraphics[width=\textwidth]{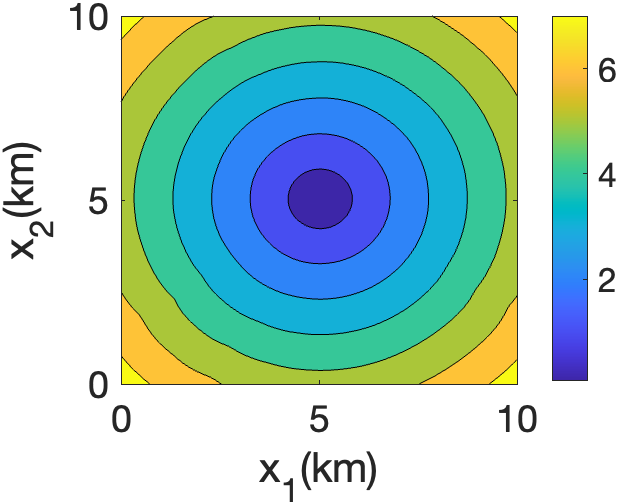}
         \caption{$T_1$}
         \label{fig:3a_T1}
     \end{subfigure}
     \hfill
     \begin{subfigure}[b]{0.3\textwidth}
         \centering
         \includegraphics[width=\textwidth]{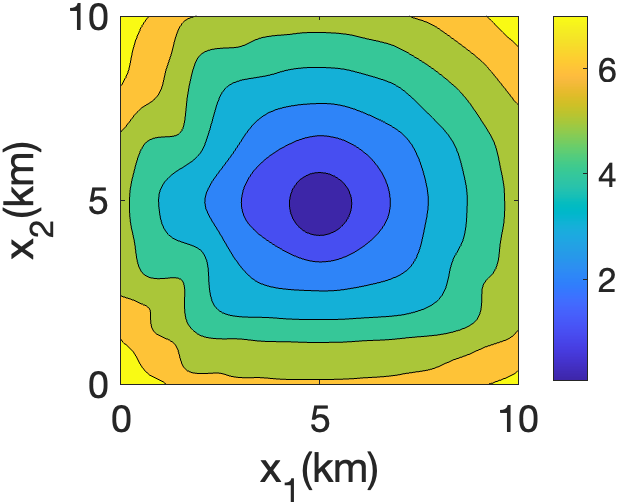}
         \caption{$T_2$}
         \label{fig:3a_T2}
     \end{subfigure}
        \caption{Example 3a: Contour plots of the reachability times $u$, $T_1$, and $T_2$ in $10^3$ seconds.}
        \label{fig:3a_contours}
\end{figure}

\begin{figure}[H]
    \centering
    \includegraphics[scale = 0.2]{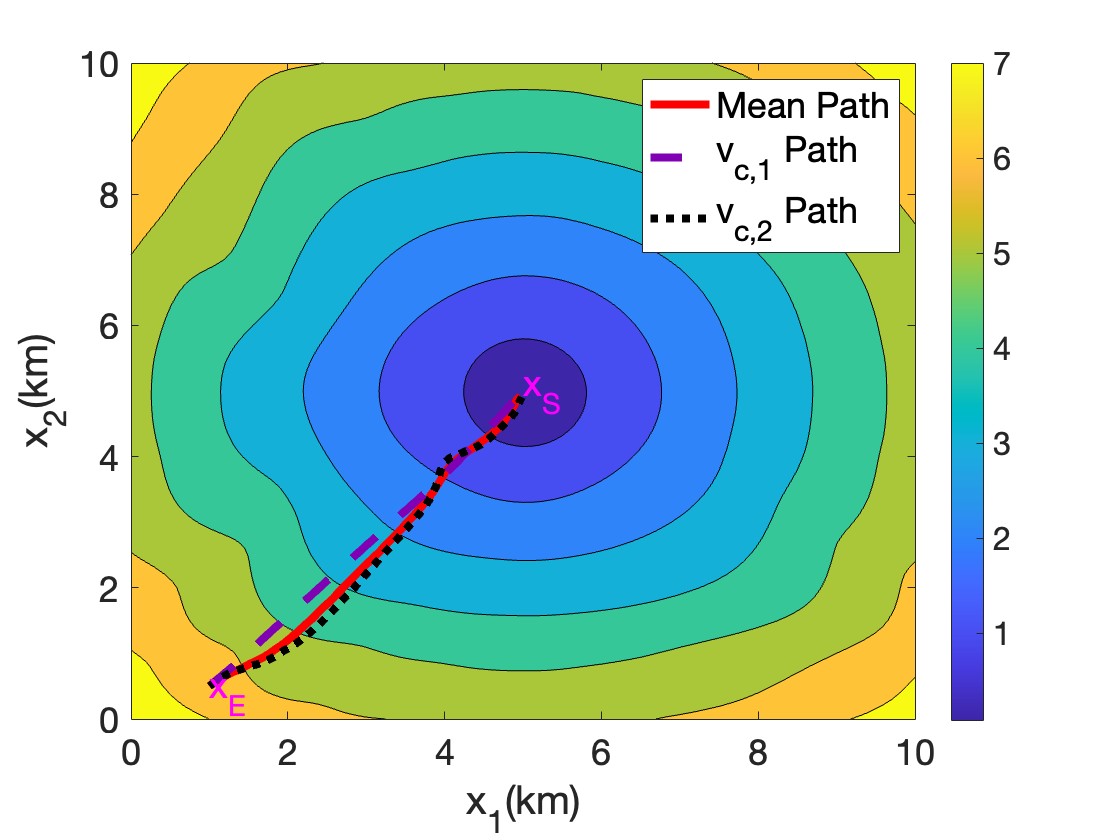}
    \caption{Example 3a: Two deterministic paths (purple and black) and the mean uncertainty path (red) from $(5\text{ km},5\text{ km})$ to $(1\text{ km},0.5\text{ km})$ on top of contour of $u$ in $10^3$ seconds.}
    \label{fig:low_vort}
\end{figure}

\begin{figure}[t!]
     \centering
     \begin{subfigure}[b]{0.45\textwidth}
         \centering
         \includegraphics[width=\textwidth]{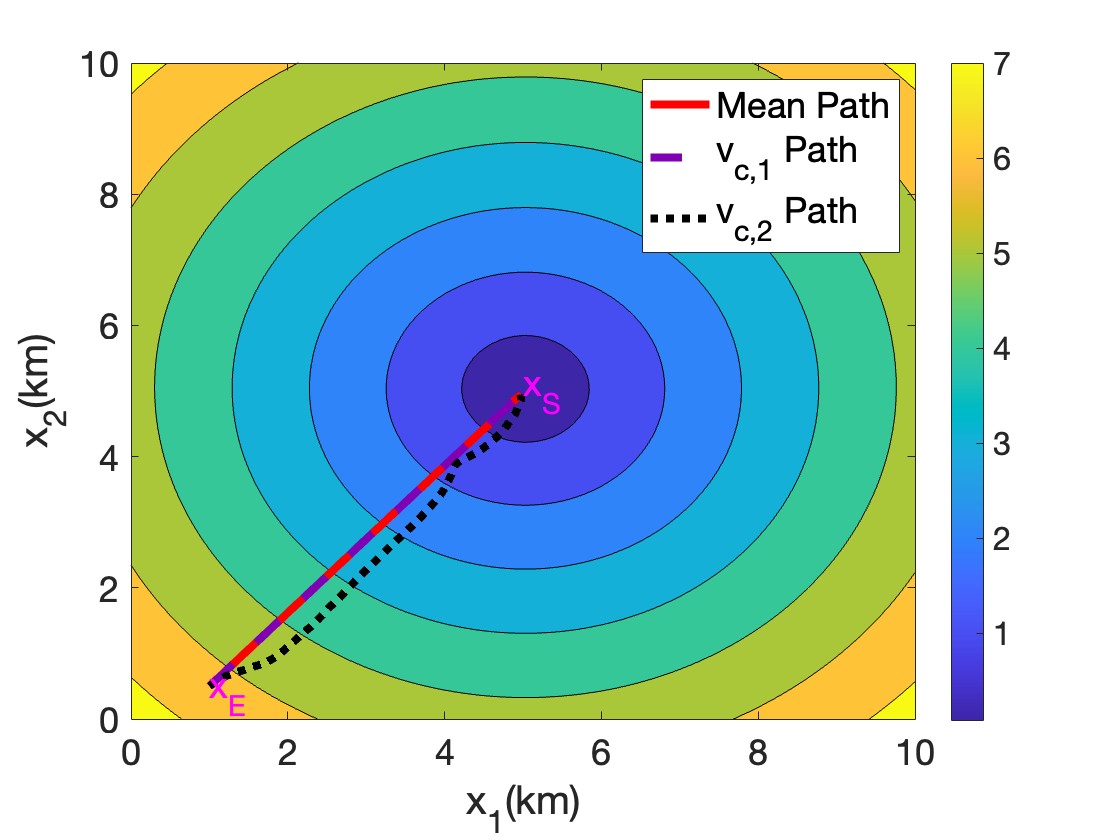}
         \caption{$p(1)=1, \quad p(2)=0$}
         \label{fig:3a_10}
     \end{subfigure}
     %\hfill
     \begin{subfigure}[b]{0.45\textwidth}
         \centering
         \includegraphics[width=\textwidth]{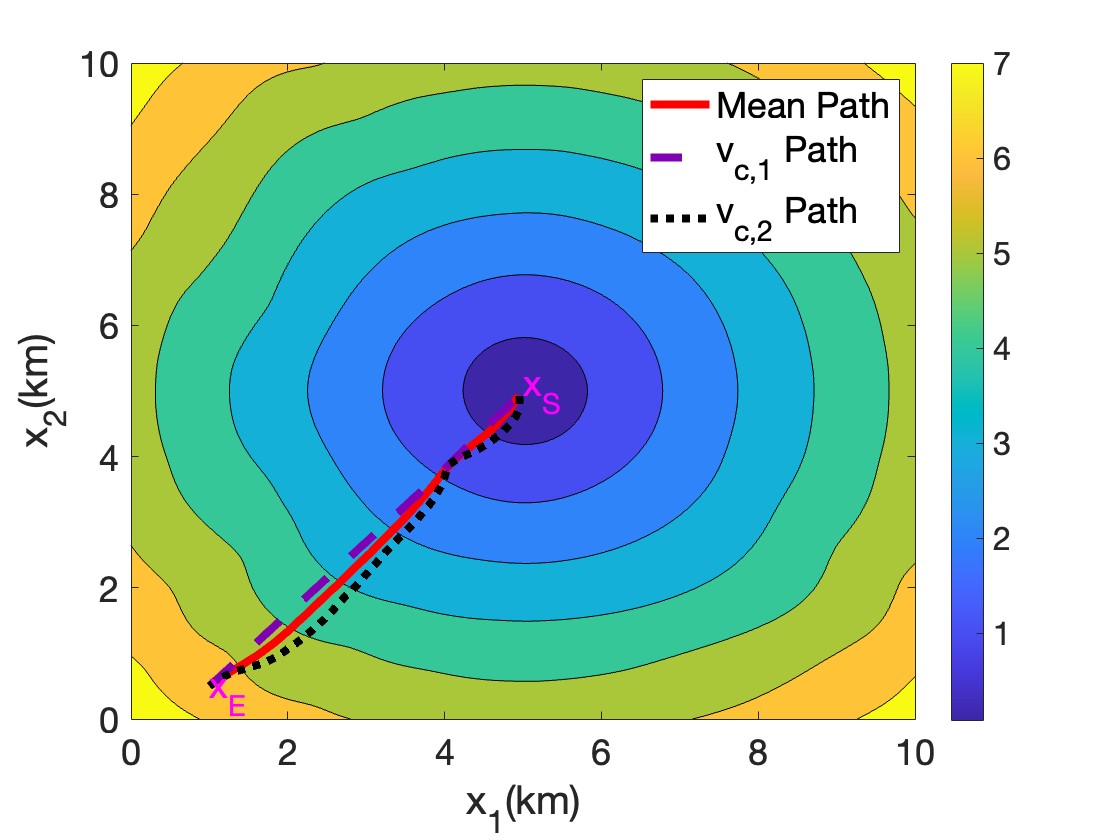}
         \caption{$p(1)=0.7, \quad p(2)=0.3$}
         \label{fig:3a_73}
     \end{subfigure}
     %\hfill
     \begin{subfigure}[b]{0.45\textwidth}
         \centering
         \includegraphics[width=\textwidth]{Figures/3afig55.jpg}
         \caption{$p(1)=0.5, \quad p(2)=0.5$}
         \label{fig:3a_55}
     \end{subfigure}
     %\hfill
     \begin{subfigure}[b]{0.45\textwidth}
         \centering
         \includegraphics[width=\textwidth]{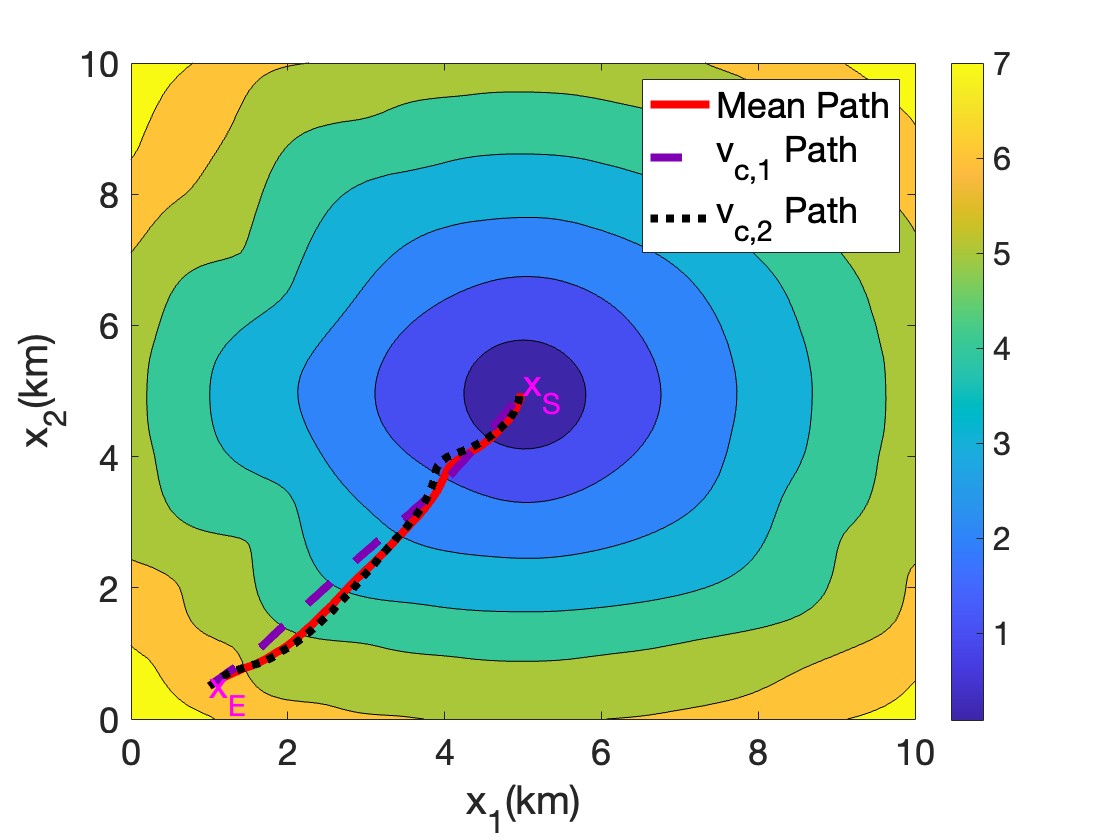}
         \caption{$p(1)=0.3, \quad p(2)=0.7$}
         \label{fig:3a_37}
     \end{subfigure}
     \begin{subfigure}[b]{0.45\textwidth}
         \centering
         \includegraphics[width=\textwidth]{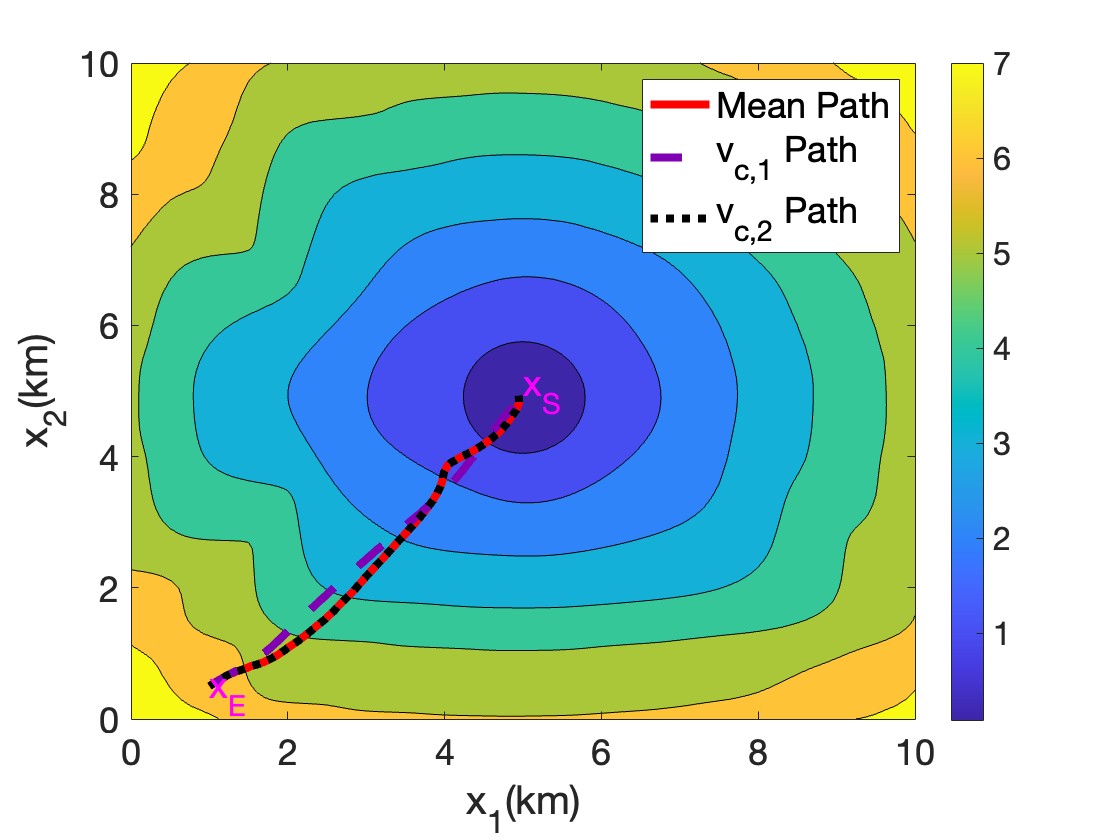}
         \caption{$p(1)=0, \quad p(2)=1$}
         \label{fig:3a_01}
     \end{subfigure}
\caption{ Example 3a: The mean uncertainty path under different probabilities.}
\label{fig:3a_prob}
\end{figure}

Figure \ref{fig:3a_prob} tests how different probabilities of realization affect the mean reachability time contour plot $u$ and the mean uncertainty path derived.  We note that as $p(1) \rightarrow 1$ and $p(2) \rightarrow 0$, $u$ begins to closely resemble $T_1$ and the optimal path constructed from $u$ converges to the path constructed from $T_1$.  The same holds with the reversal of the probabilities.

\subsubsection{Example 3b:  $\frac{T_1}{T_2} >> 1$}

\begin{figure}[t]
     \centering
     \begin{subfigure}[b]{0.3\textwidth}
         \centering
         \includegraphics[width=\textwidth]{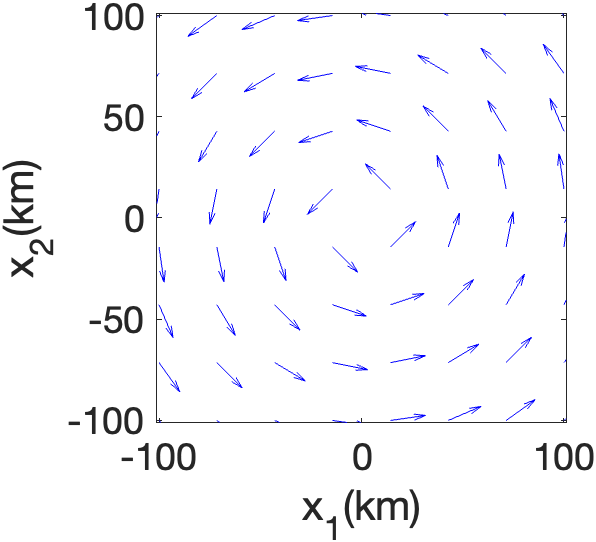}
         \caption{$\bv_{c,1}$ at $t=0\text{ s}$}
         \label{fig:3b_v_c1_t0}
     \end{subfigure}
     \hfill
     \begin{subfigure}[b]{0.3\textwidth}
         \centering
         \includegraphics[width=\textwidth]{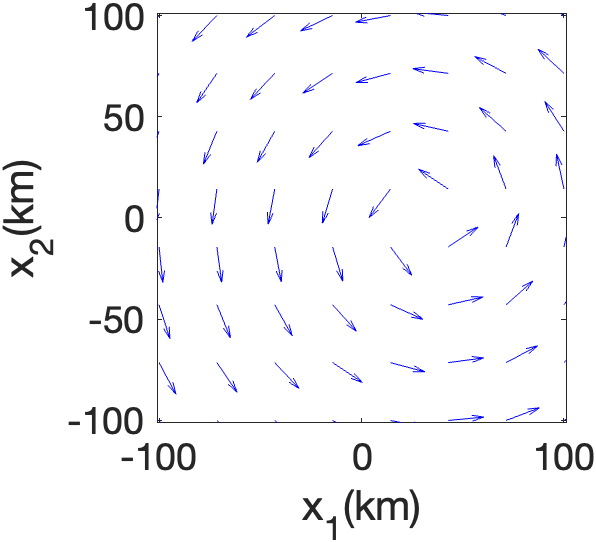}
         \caption{$\bv_{c,1}$ at $t=33.3 *10^3\text{ s}$}
         \label{fig:3b_v_c1_t1}
     \end{subfigure}
     \hfill
     \begin{subfigure}[b]{0.3\textwidth}
         \centering
         \includegraphics[width=\textwidth]{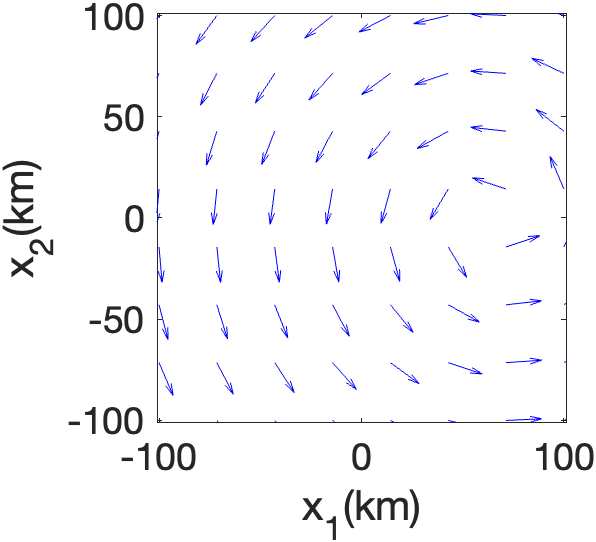}
         \caption{$\bv_{c,1}$ at $t=66.7 *10^3\text{ s}$}
         \label{fig:3b_v_c1_t2}
     \end{subfigure}
     \begin{subfigure}[b]{0.3\textwidth}
         \centering
         \includegraphics[width=\textwidth]{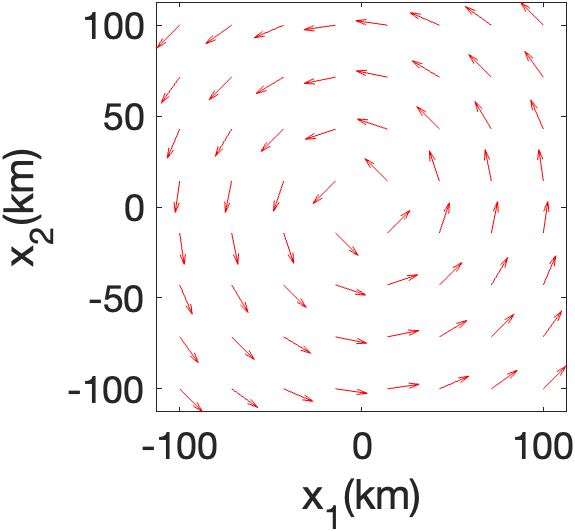}
         \caption{$\bv_{c,2}$ at $t=0\text{ s}$}
         \label{fig:3b_v_c2_t0}
     \end{subfigure}
     \hfill
     \begin{subfigure}[b]{0.3\textwidth}
         \centering
         \includegraphics[width=\textwidth]{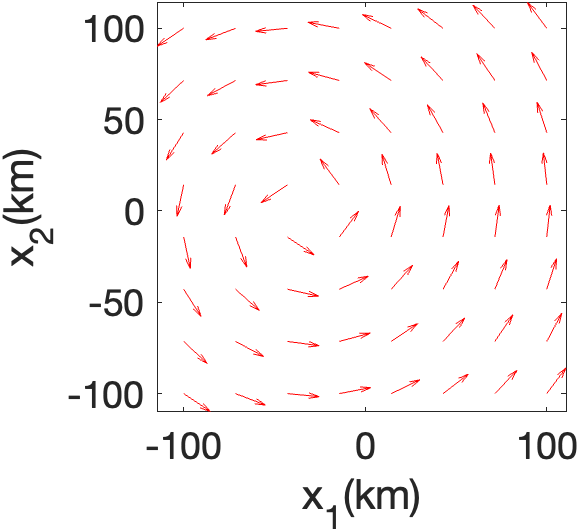}
         \caption{$\bv_{c,2}$ at $t=33.3 *10^3\text{ s}$}
         \label{fig:3b_v_c2_t1}
     \end{subfigure}
     \hfill
     \begin{subfigure}[b]{0.3\textwidth}
         \centering
         \includegraphics[width=\textwidth]{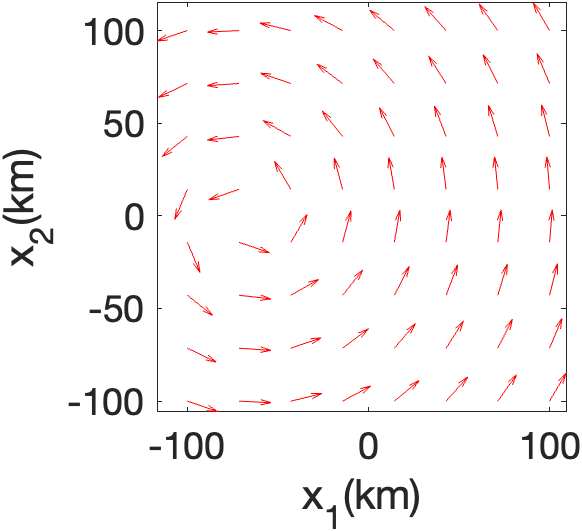}
         \caption{$\bv_{c,2}$ at $t=66.7 *10^3\text{ s}$}
         \label{fig:3b_v_c2_t2}
     \end{subfigure}
    \caption{Example 3b: Quiver plot snapshots in time of a left moving dimensionalized counterclockwise vortex ocean model and a right moving model over the same spatial domain $\Omega$. }
    \label{fig:high_vort_quivers}
\end{figure}

For our next example, we choose ensemble members for which $T_1$ and $T_2$ differ significantly.  The computational domain is defined as $\Omega = [-100\text{ km},100\text{ km}] \times [-100\text{ km},100\text{ km}]$; the starting and target positions are $\bx_S = (0\text{ km},0\text{ km})$ and $\bx_E = (50\text{ km},50\text{ km})$, respectively.  The numerical example is conducted on a $61 \times 61$ nodal mesh.
The ocean current model ensemble members are defined as 
\begin{align}
\bv_{c,1}(\bx,t) &=\frac{1}{(10^{-5}||\bw||_2+\frac{4}{3})||\bw||_2}*\begin{bmatrix}
            -w_2 \\
            w_1
        \end{bmatrix}, \text{ where } \bw = \begin{bmatrix}
            x_1 - t \\
            x_2
        \end{bmatrix}, \\
\bv_{c,2}(\bx,t)&=\frac{1}{(10^{-5}||\tilde{\bw}||_2+\frac{4}{3})||\tilde{\bw}||_2}*\begin{bmatrix}
            -\tilde{w}_2 \\
            \tilde{w}_1
        \end{bmatrix}, \text{ where } \tilde{\bw} =\begin{bmatrix}
            x_1 + t \\
            x_2
\end{bmatrix},
\end{align}
where $p(1)=p(2)=0.5$.  We set $\eta=(\eta_{x_1},\eta_{x_2})=(1.75,1.75)$.

These ensemble members define moving vortices; $\bv_{c,1}$ is moving to the right and $\bv_{c,2}$ is moving to the left.  Fixed-time snapshots of these ocean current models are provided in Figure \ref{fig:high_vort_quivers}, highlighting the movement of each vortex.

The contours for the optimal mean reachability time $u$, and the deterministic reachability times $T_1$ and $T_2$ are plotted in Figure \ref{fig:3b_contours}.  The significant differences between the contours $T_1$ and $T_2$ reflect the divergent dynamics between the two ensemble members.  As a result of these differences, the contours for $u$ do not resemble the contours of either ensemble member's reachability time.

Because the contours of $u$ do not resemble those of either $T_1$ and $T_2$, we do not expect the optimal mean-time trajectory to resemble either of the optimal deterministic trajectories.  This is confirmed by Figure \ref{fig:high_vort}, which plots all three paths.

\begin{figure}
     \centering
     \begin{subfigure}[b]{0.3\textwidth}
         \centering
         \includegraphics[width=\textwidth]{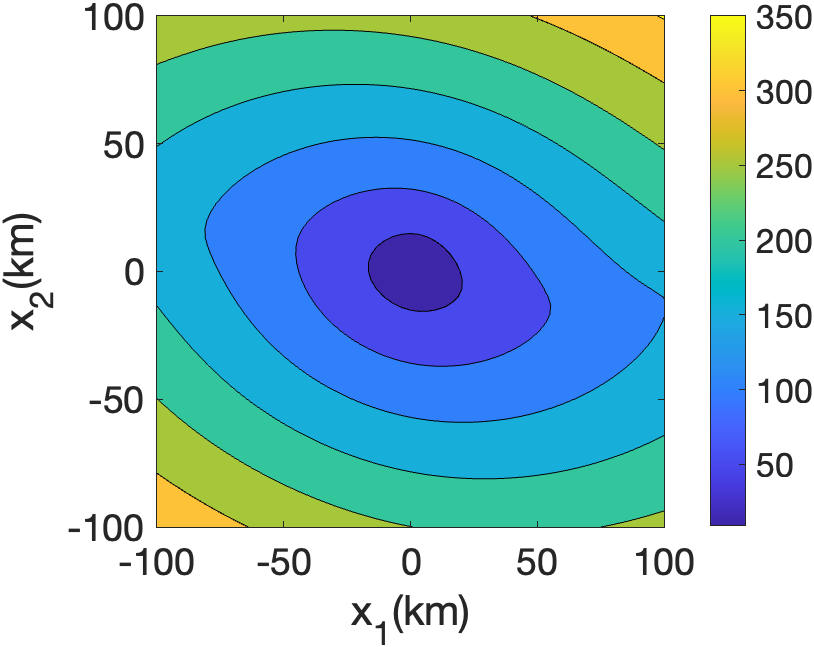}
         \caption{$u$}
         \label{fig:3b_u}
     \end{subfigure}
     \hfill
     \begin{subfigure}[b]{0.3\textwidth}
         \centering
         \includegraphics[width=\textwidth]{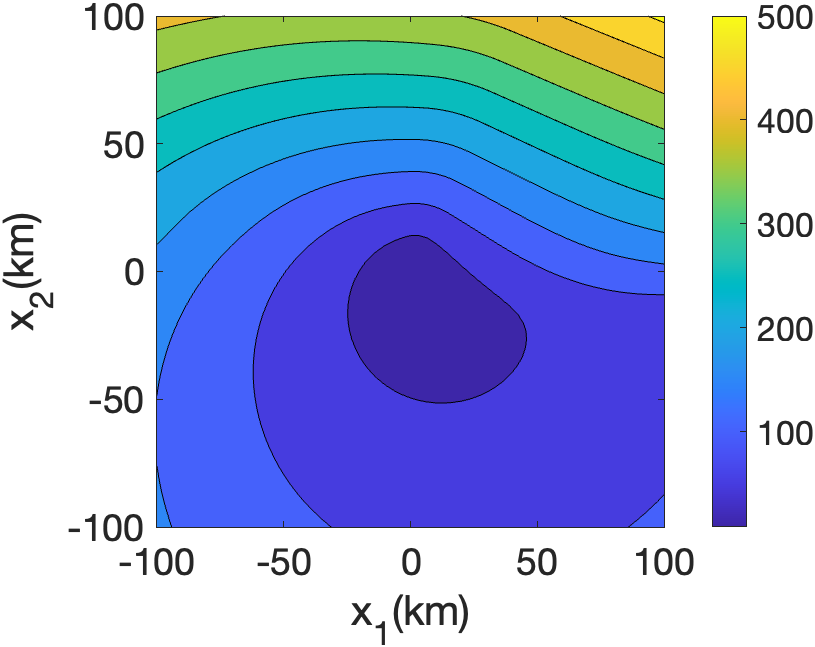}
         \caption{$T_1$}
         \label{fig:3b_T1}
     \end{subfigure}
     \hfill
     \begin{subfigure}[b]{0.3\textwidth}
         \centering
         \includegraphics[width=\textwidth]{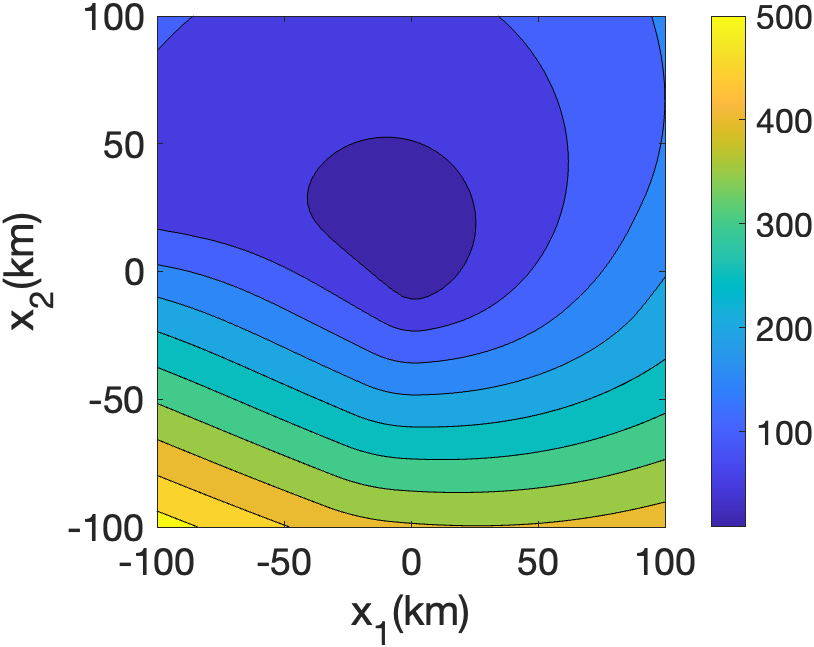}
         \caption{$T_2$}
         \label{fig:3b_T2}
     \end{subfigure}
        \caption{Example 3b: The contour plots for the solutions $u$, $T_1$, and $T_2$ in $10^3$ seconds  given a left moving dimensionalized counterclockwise vortex ocean model or a right moving model.}
        \label{fig:3b_contours}
\end{figure}

\begin{figure}
    \centering
    \includegraphics[scale = 0.2]{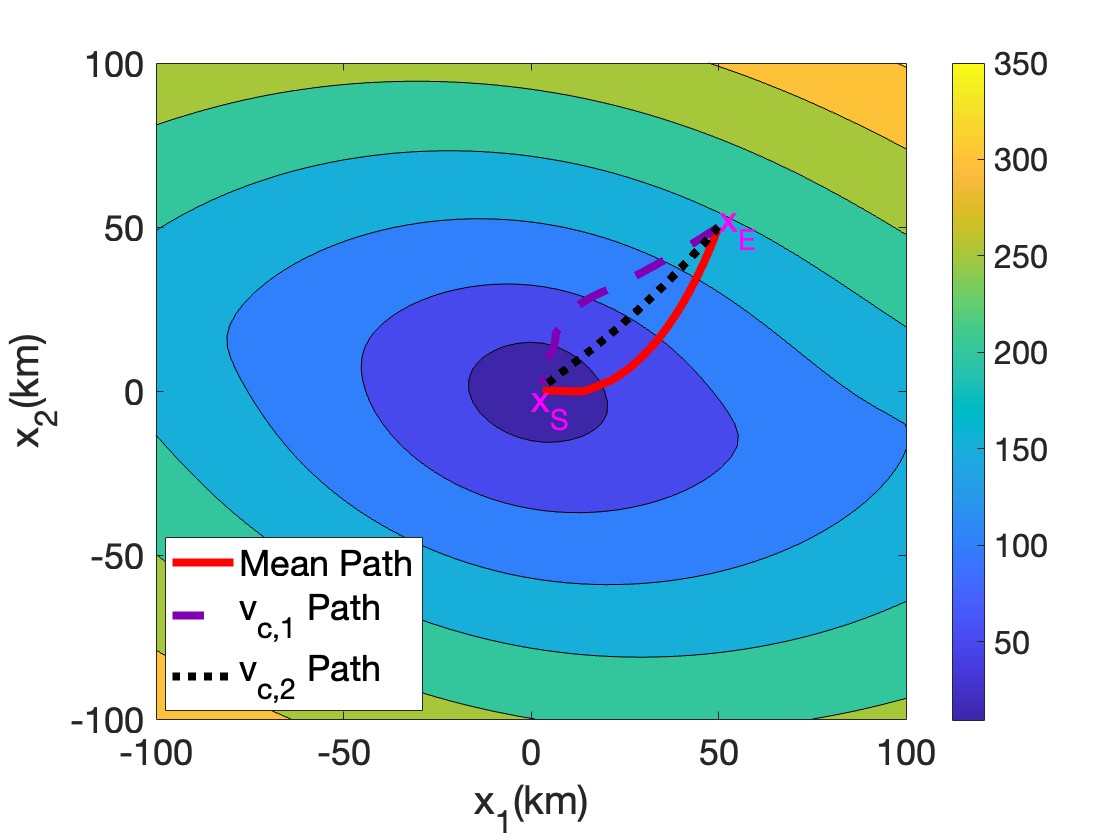}
    \caption{Example 3b: Two deterministic paths (purple and black) each with a different left or right moving dimensionalized counterclockwise vortex and the mean uncertainty path (red) from $(0\text{ km},0\text{ km})$ to $(50\text{ km},50\text{ km})$ with $p(1)=p(2)=0.5$ on top of the contour plot of $u$ in $10^3$ seconds.}
    \label{fig:high_vort}
\end{figure}

\subsection{Example 4. Five-Member Ensembles}
\label{five_mem_ens}

In this set of examples, we demonstrate the ability to consider ensembles of more than two ocean current models.  Specifically, we focus on examples with five distinct ensemble members. 
As in several of the previous examples, the computational domain is defined as $\Omega = [0\text{ km},10\text{ km}] \times [0\text{ km},10\text{ km}]$; the starting and target positions are $\bx_S = (5\text{ km},5\text{ km})$ and $\bx_E = (1\text{ km},0.5\text{ km})$, respectively.  We set $\eta=(\eta_{x_1},\eta_{x_2})=(1.5,1.5)$.

\subsubsection{Example 4a: Ensemble members are constant vectors exhibiting symmetry}

\begin{figure}[t!]
     \centering
     \begin{subfigure}[b]{0.3\textwidth}
         \centering
         \includegraphics[width=\textwidth]{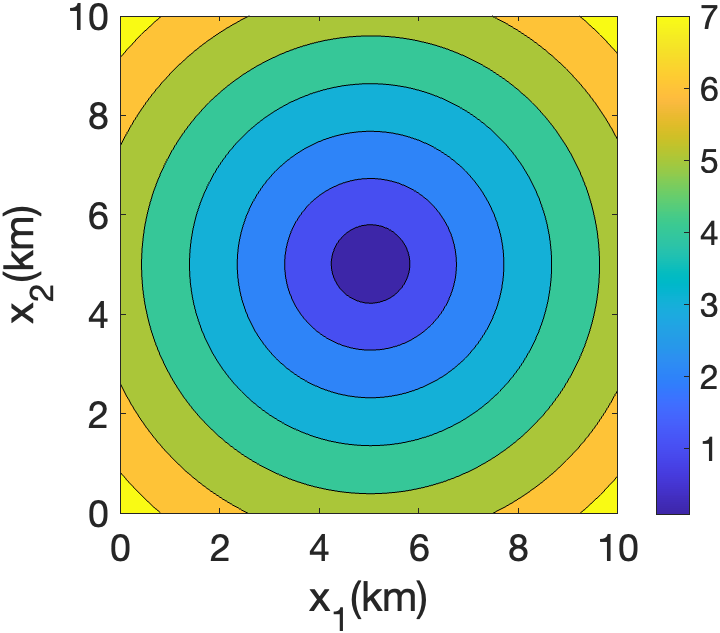}
         \caption{$u$}
         \label{fig:4_u}
     \end{subfigure}
     \hfill
     \begin{subfigure}[b]{0.3\textwidth}
         \centering
         \includegraphics[width=\textwidth]{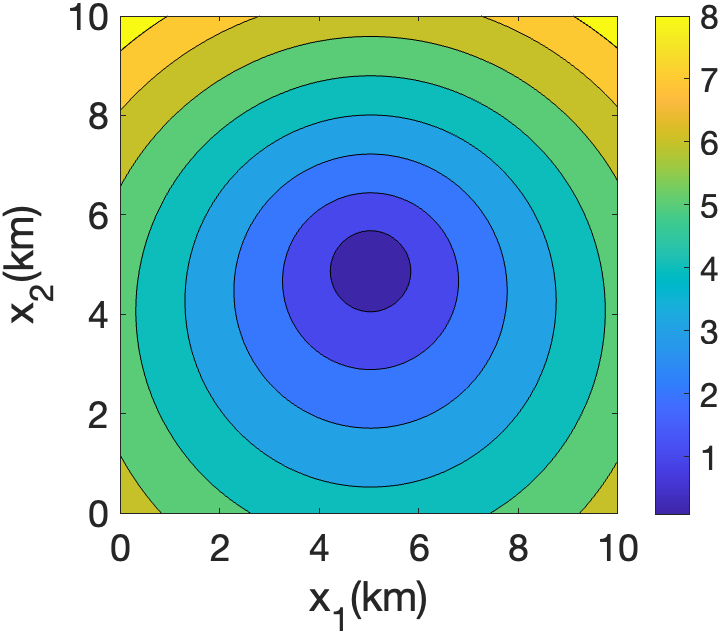}
         \caption{$T_1$}
         \label{fig:4_T1}
     \end{subfigure}
     \hfill
     \begin{subfigure}[b]{0.3\textwidth}
         \centering
         \includegraphics[width=\textwidth]{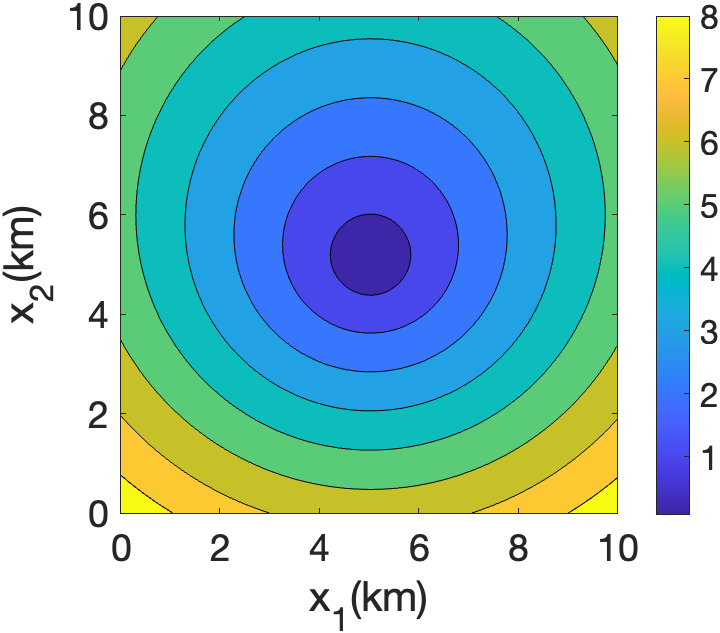}
         \caption{$T_2$}
         \label{fig:4_T2}
     \end{subfigure}
     \begin{subfigure}[b]{0.3\textwidth}
         \centering
         \includegraphics[width=\textwidth]{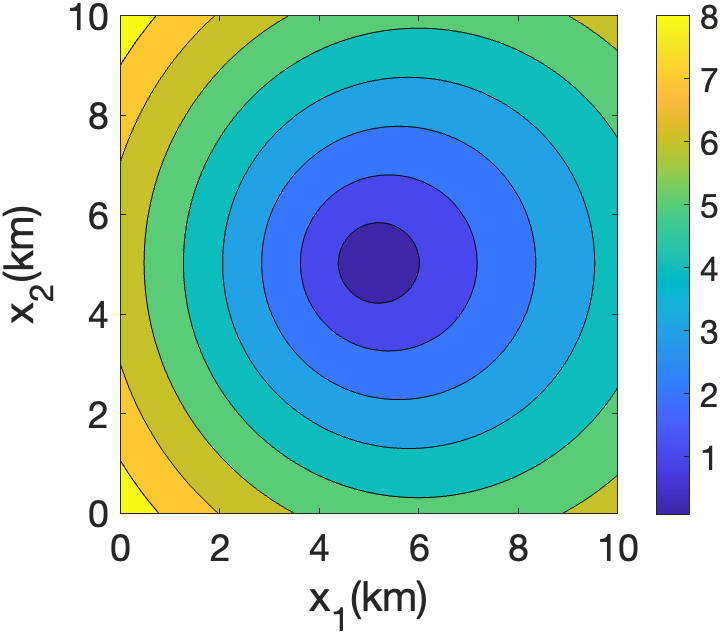}
         \caption{$T_3$}
         \label{fig:4_T3}
     \end{subfigure}
     \hfill
     \begin{subfigure}[b]{0.3\textwidth}
         \centering
         \includegraphics[width=\textwidth]{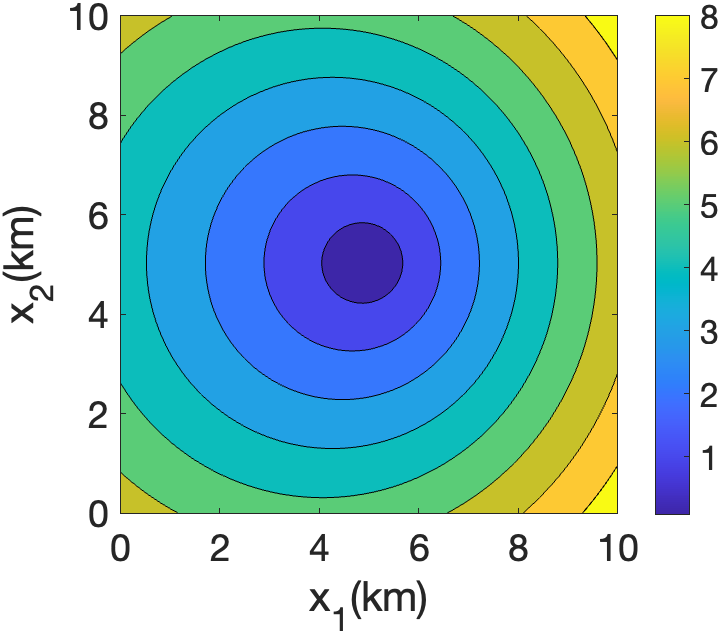}
         \caption{$T_4$}
         \label{fig:4_T4}
     \end{subfigure}
     \hfill
     \begin{subfigure}[b]{0.3\textwidth}
         \centering
         \includegraphics[width=\textwidth]{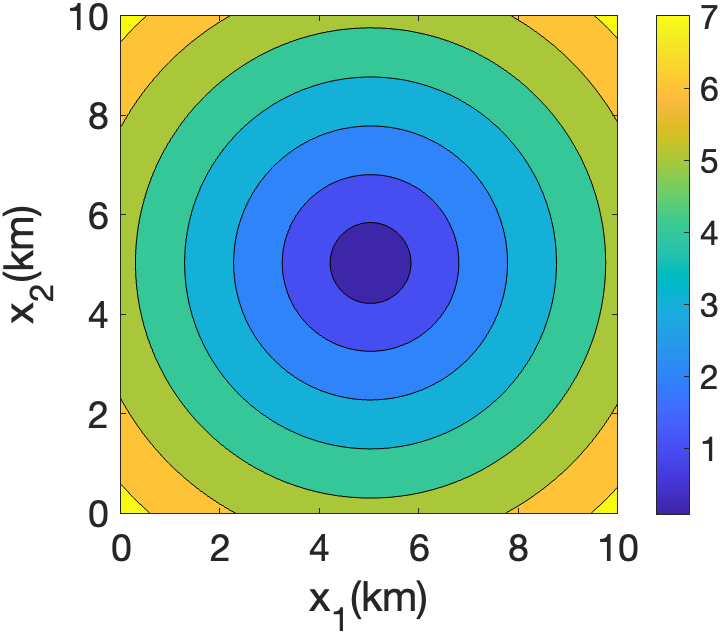}
         \caption{$T_5$}
         \label{fig:4_T5}
     \end{subfigure}
        \caption{Example 4a: The contour plots for the solutions $u$, $T_1$, $T_2$, $T_3$, $T_4$, and $T_5$ in $10^3$ seconds for 5-member ensemble of constant currents.}
        \label{fig:4_contours}
\end{figure}

\begin{figure}[htbp!]
    \centering
    \includegraphics[scale = 0.16]{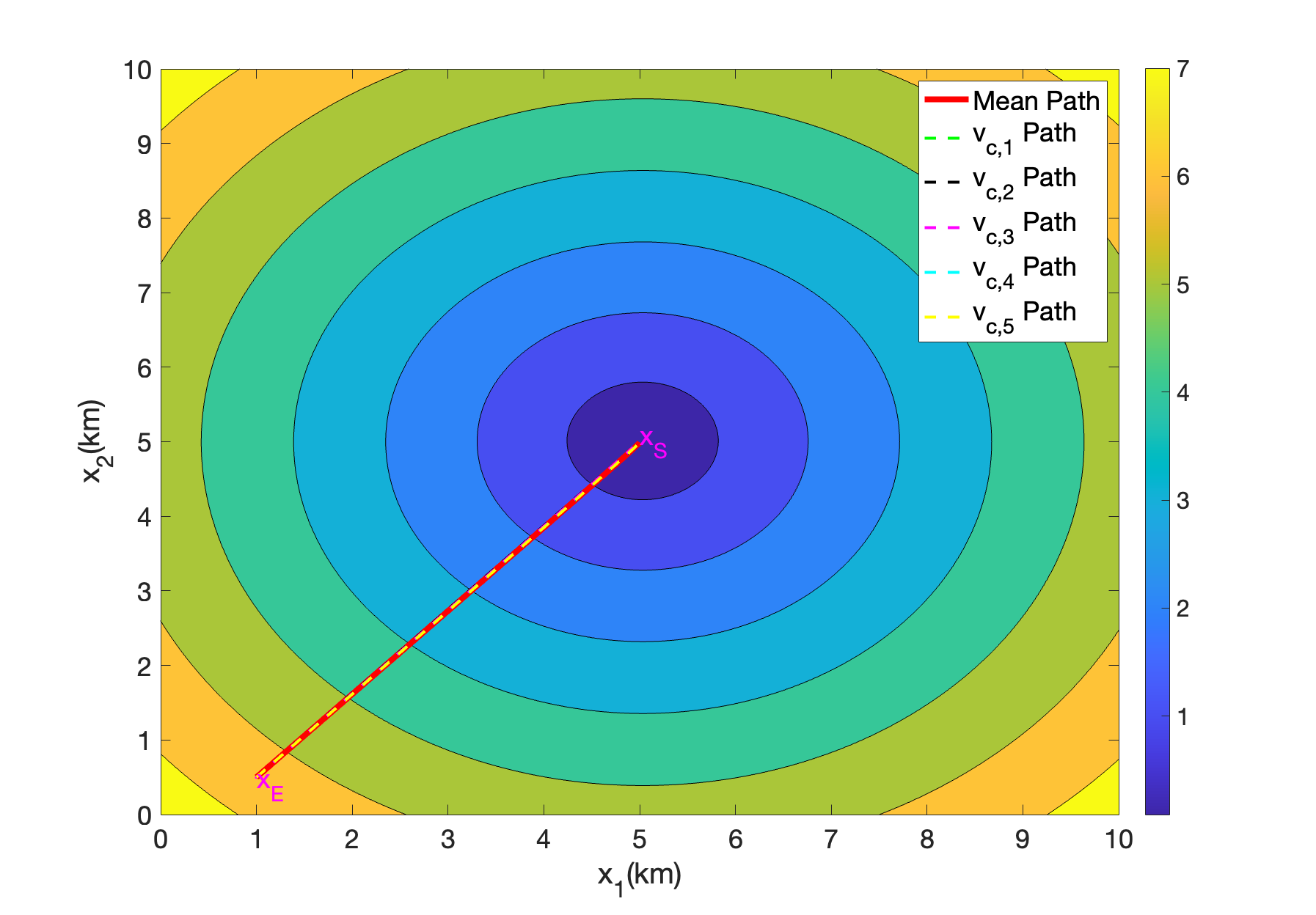}
    \caption{Example 4a: Mean uncertainty path (red) and deterministic paths on top of the contour of $u$ in $10^3$ seconds considering 5 ocean current models.}
    \label{fig:5_model}
\end{figure}

For our first case, we consider an ensemble of five constant ocean current models that exhibit symmetry.  The ensemble members are 
\begin{multline*}
\bv_{c,1}(\bx,t)=[0, -0.2]^T, \quad
\bv_{c,2}(\bx,t)=[0,0.2]^T, \quad  
\bv_{c,3}(\bx,t)=[0.2,0]^T, \\
\bv_{c,4}(\bx,t)=[-0.2,  0], \quad
\bv_{c,5}(\bx,t)=[0,0]^T,
\end{multline*}
where $p(1)=p(2)=p(3)=p(4)=p(5)=0.2$.  
The ensemble members satisfy 
$\bv_{c,1}+\bv_{c,2}+\bv_{c,3}+\bv_{c,4}+\bv_{c,5} = \bf{0}.$
In addition, it can be shown that
$$
u(\bx,\bx_S) = \sum _{i=1}^5 p(i)\cdot T_i(\bx,\bx_S,\gamma^*)
$$
following the definitions in \eqref{eq:weighted-time} and \eqref{eq:u-def}.  Combining the above results and linearizing the individual reachability times $T_i$ about $\bv_c \equiv 0$ (this is reasonable since the ocean speed of each ensemble member is small relative to the vehicle's maximum speed), suggests that the optimal mean reachability time $u$ should approximate the time required to travel in the case of zero ocean current.  Furthermore, the optimal mean-time path is guaranteed to be a straight line, since the straight-line path is optimal for each of the ensemble members.  Note that for sufficiently large ocean currents relative to the vehicle's maximum speed, the effects from the individual currents may not cancel out numerically.

Figures \ref{fig:4_contours} and \ref{fig:5_model} confirm the above observations.  Figure  \ref{fig:4_contours} illustrates the contours of the reachability functions $u$ and the individual ensemble members; the contours of $u$ appear identical to those of the zero-current case, $T_5$.  Figure \ref{fig:5_model} plots the optimal mean-time path $\gamma$ along with the optimal deterministic path for each ensemble member; all six paths appear to lie on the same straight line.

\subsubsection{Example 4b: Ensemble members exhibit greater heterogeneity}

We now consider a more complicated ensemble exhibiting spatial heterogeneity.  The ensemble members are shown in Figure \ref{fig:vcsubdividedquiver}; we set $p(1)=p(2)=p(3)=p(4)=p(5)=0.2$.

\begin{figure}[!t]
     \centering
     \begin{subfigure}[b]{0.31\textwidth}
         \centering
         \includegraphics[width=\textwidth]{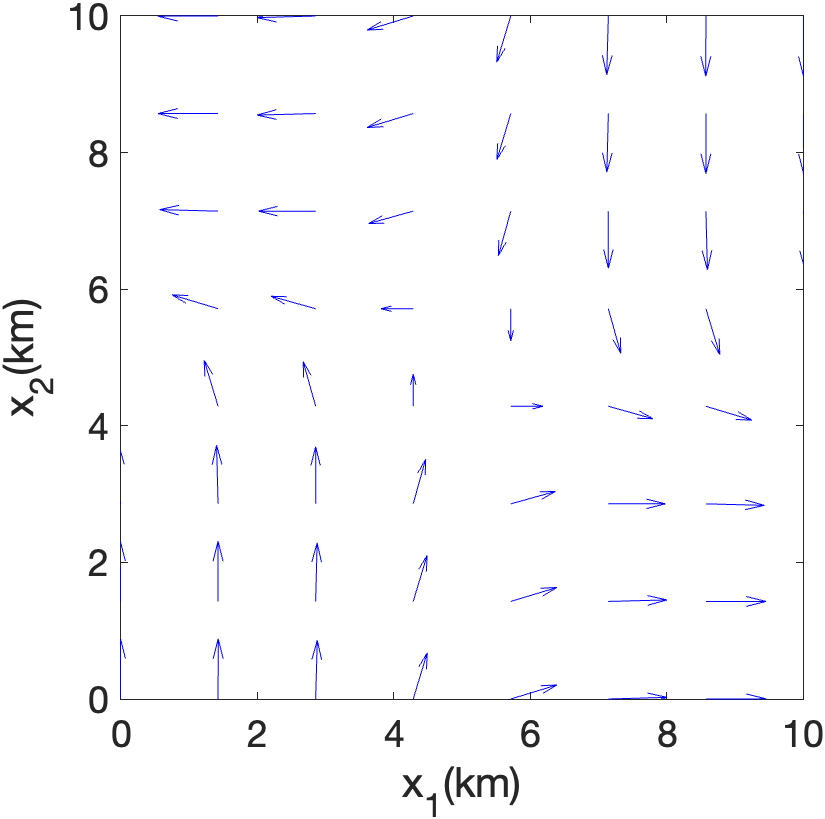}
         \caption{$\bv_{c,1}$}
         \label{fig:4b_v_c1}
     \end{subfigure}
     %\hfill
     \begin{subfigure}[b]{0.31\textwidth}
         \centering
         \includegraphics[width=\textwidth]{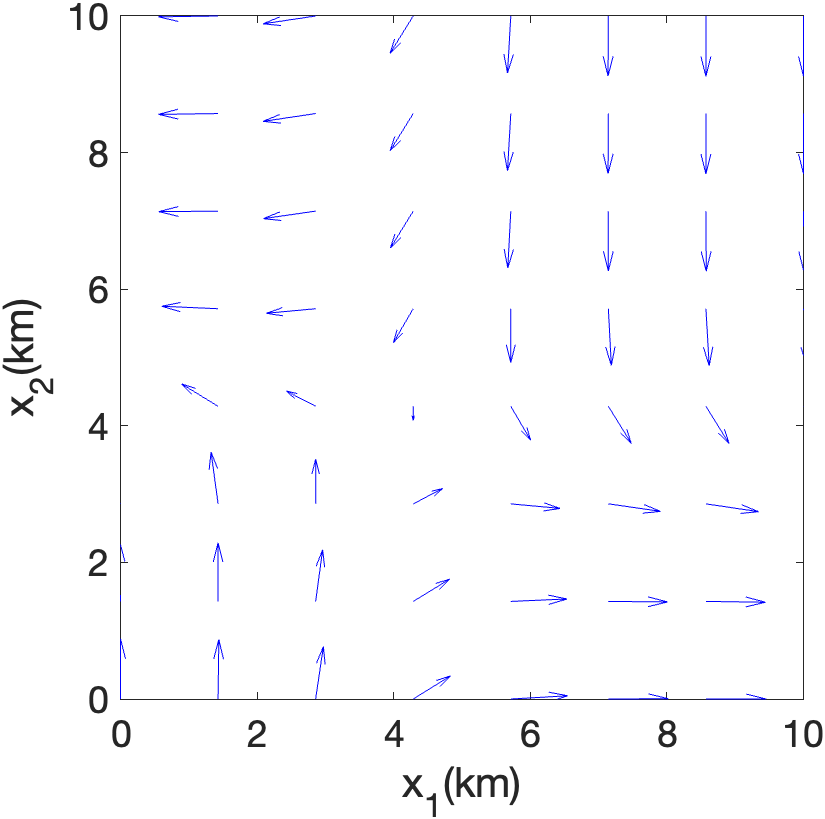}
         \caption{$\bv_{c,2}$}
         \label{fig:4b_v_c2}
     \end{subfigure}
     %\hfill
     \begin{subfigure}[b]{0.31\textwidth}
         \centering
         \includegraphics[width=\textwidth]{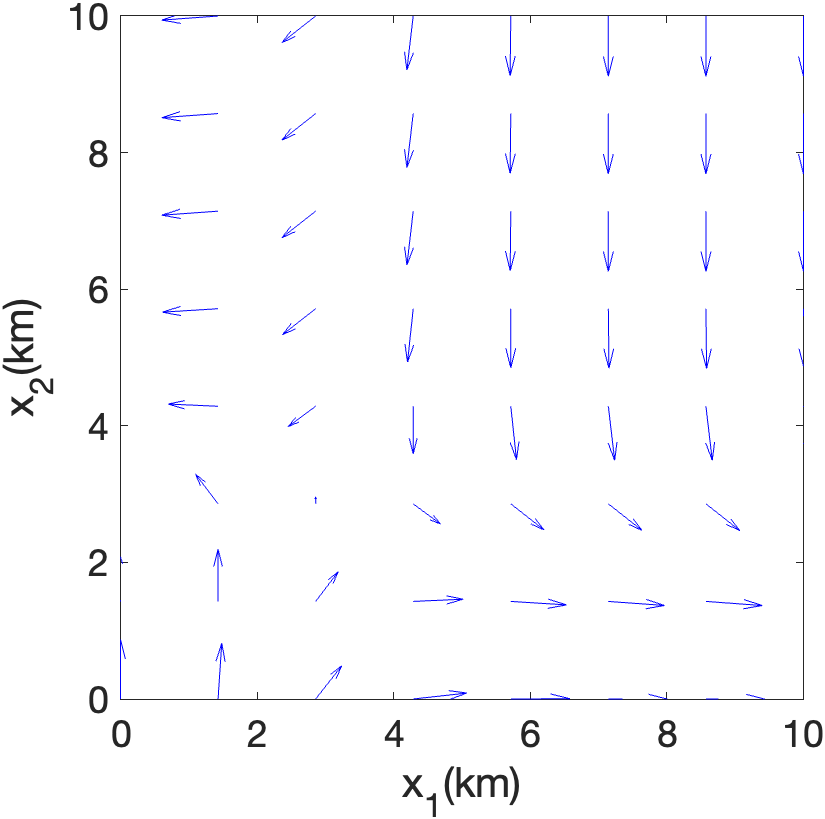}
         \caption{$\bv_{c,3}$}
         \label{fig:4b_v_c3}
     \end{subfigure}
     %\hfill
     \begin{subfigure}[b]{0.31\textwidth}
         \centering
         \includegraphics[width=\textwidth]{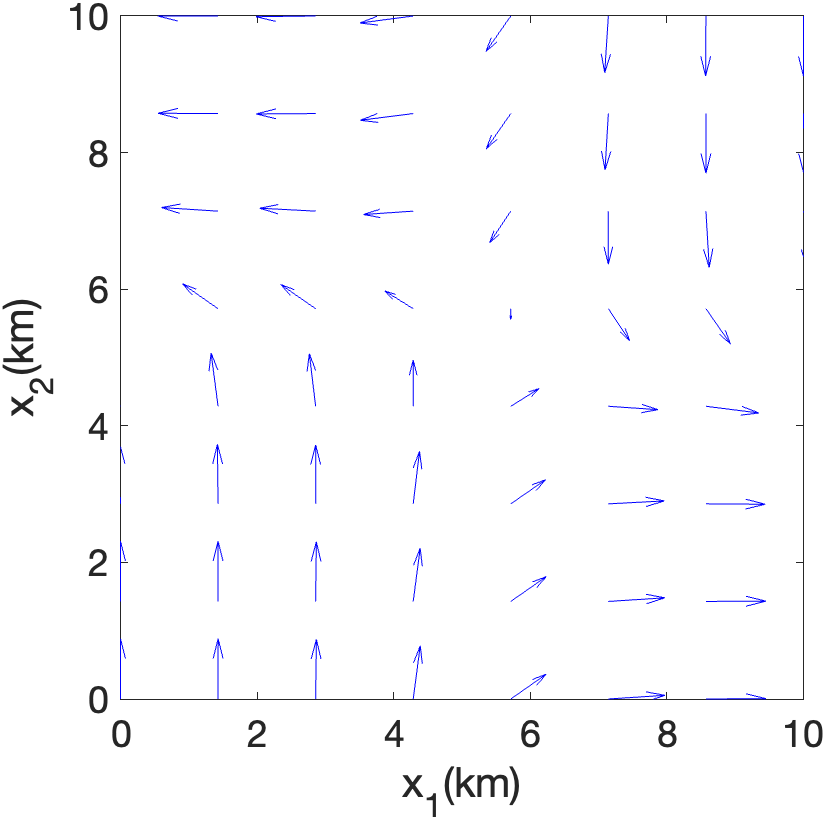}
         \caption{$\bv_{c,4}$}
         \label{fig:4b_v_c4}
     \end{subfigure}
     \begin{subfigure}[b]{0.31\textwidth}
         \centering
         \includegraphics[width=\textwidth]{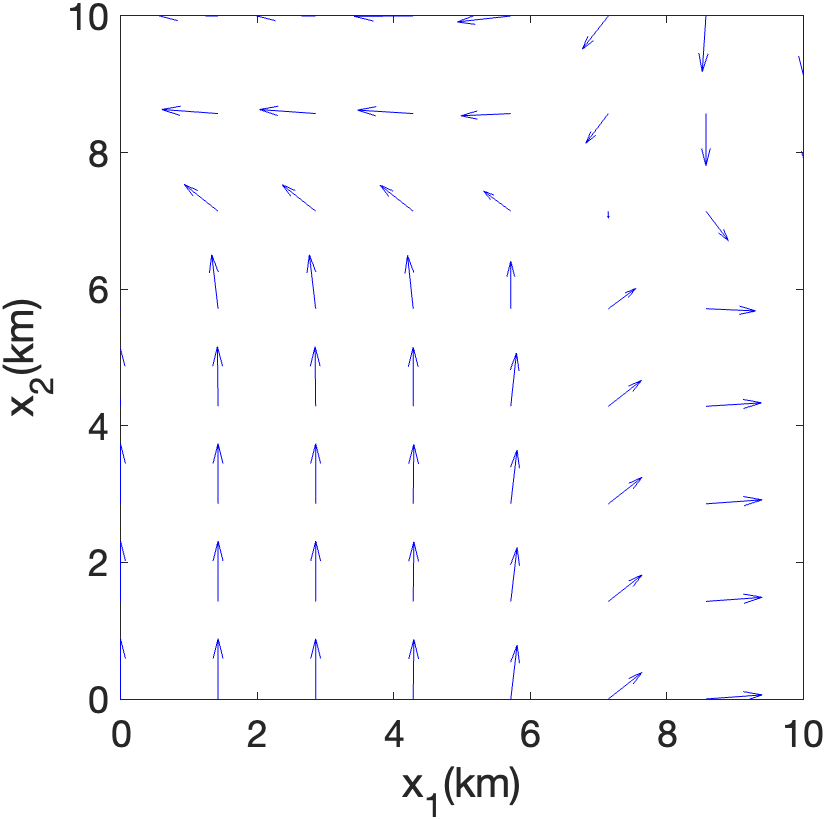}
         \caption{$\bv_{c,5}$}
         \label{fig:4b_v_c5}
     \end{subfigure}
\caption{Example 4b: Quiver plot of perturbed ocean current models over the same spatial domain $\Omega$.}
\label{fig:vcsubdividedquiver}
\end{figure}

\begin{figure}
     \centering
     \begin{subfigure}[b]{0.3\textwidth}
         \centering
         \includegraphics[width=\textwidth]{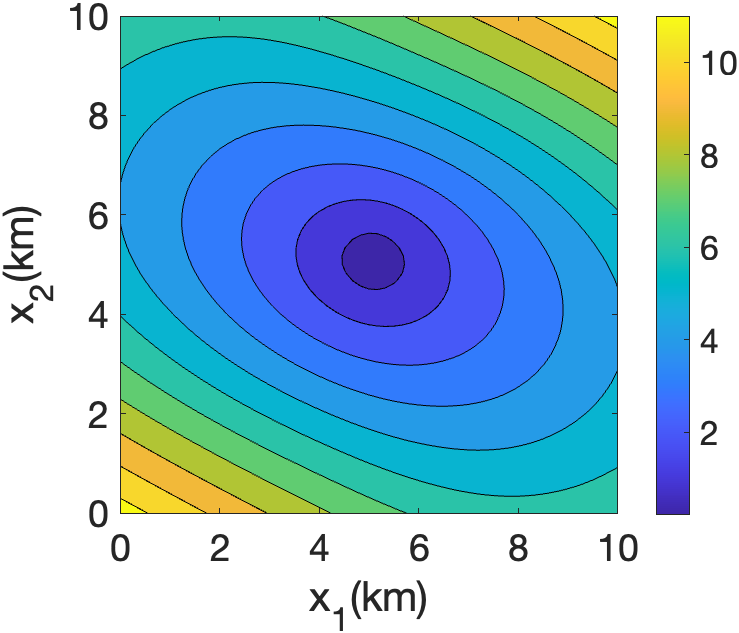}
         \caption{$u$}
         \label{fig:4b_u}
     \end{subfigure}
     \hfill
     \begin{subfigure}[b]{0.3\textwidth}
         \centering
         \includegraphics[width=\textwidth]{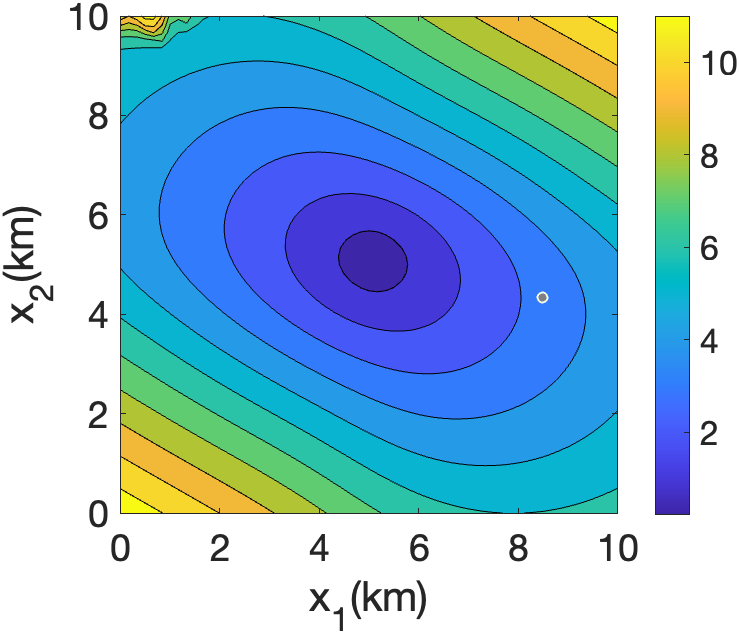}
         \caption{$T_1$}
         \label{fig:4b_T1}
     \end{subfigure}
     \hfill
     \begin{subfigure}[b]{0.3\textwidth}
         \centering
         \includegraphics[width=\textwidth]{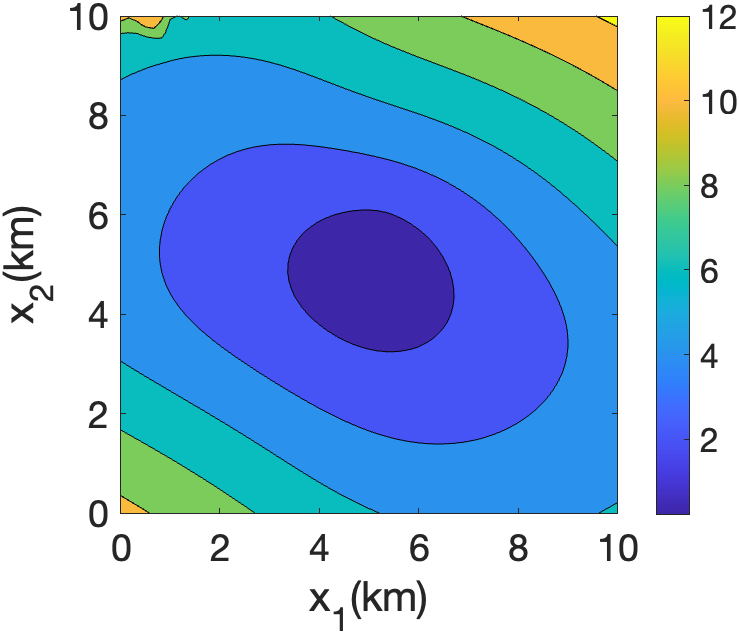}
         \caption{$T_2$}
         \label{fig:4b_T2}
     \end{subfigure}
     \begin{subfigure}[b]{0.3\textwidth}
         \centering
         \includegraphics[width=\textwidth]{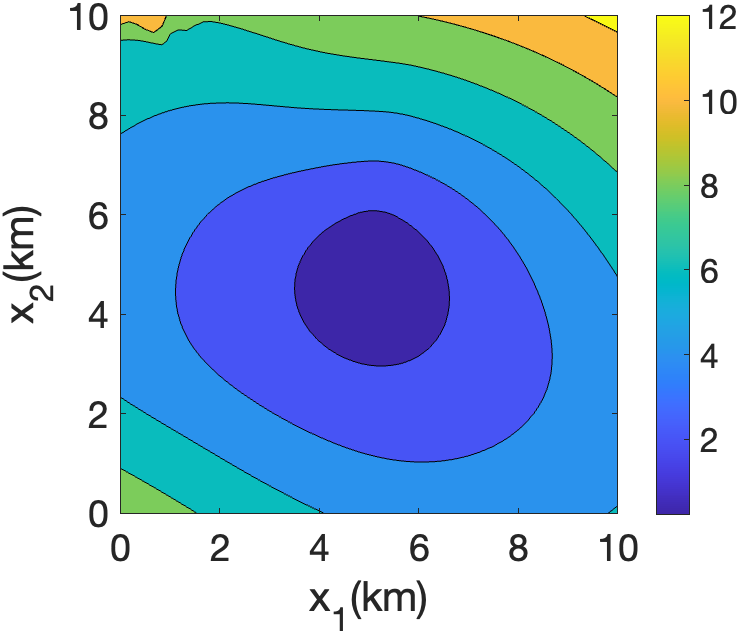}
         \caption{$T_3$}
         \label{fig:4b_T3}
     \end{subfigure}
     \hfill
     \begin{subfigure}[b]{0.3\textwidth}
         \centering
         \includegraphics[width=\textwidth]{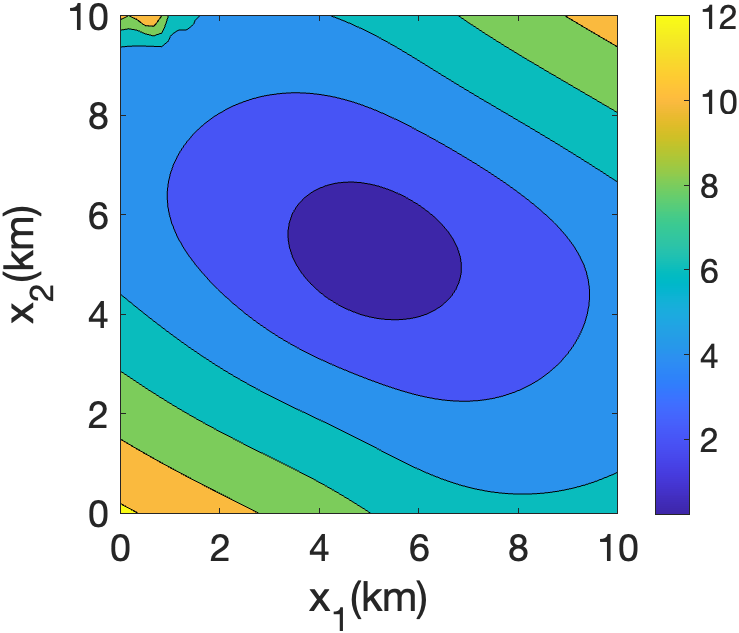}
         \caption{$T_4$}
         \label{fig:4b_T4}
     \end{subfigure}
     \hfill
     \begin{subfigure}[b]{0.3\textwidth}
         \centering
         \includegraphics[width=\textwidth]{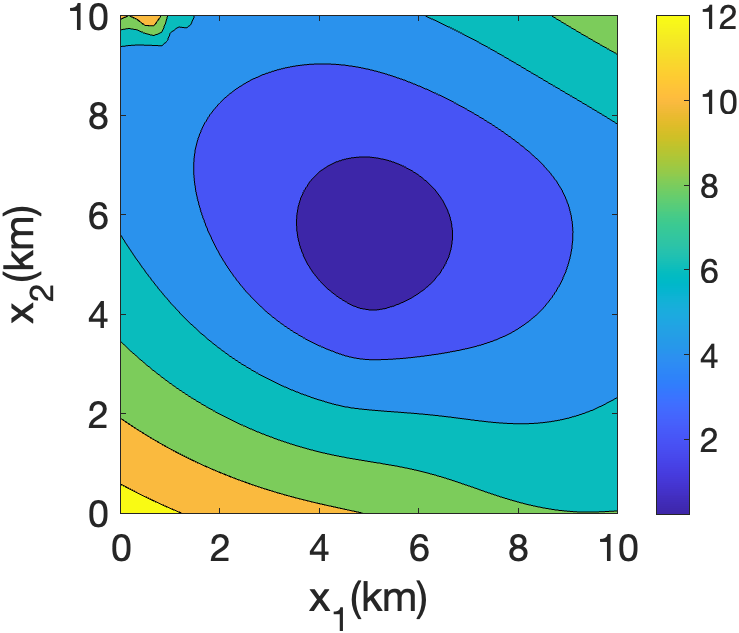}
         \caption{$T_5$}
         \label{fig:4b_T5}
     \end{subfigure}
        \caption{Example 4b: The contour plots for the solutions $u$, $T_1$, $T_2$, $T_3$, $T_4$, and $T_5$ in $10^3$ seconds for 5-member ensemble with greater heterogeneity. }
        \label{fig:4b_contours}
\end{figure}

\begin{figure}[htbp!]
    \centering
    \includegraphics[scale = 0.2]{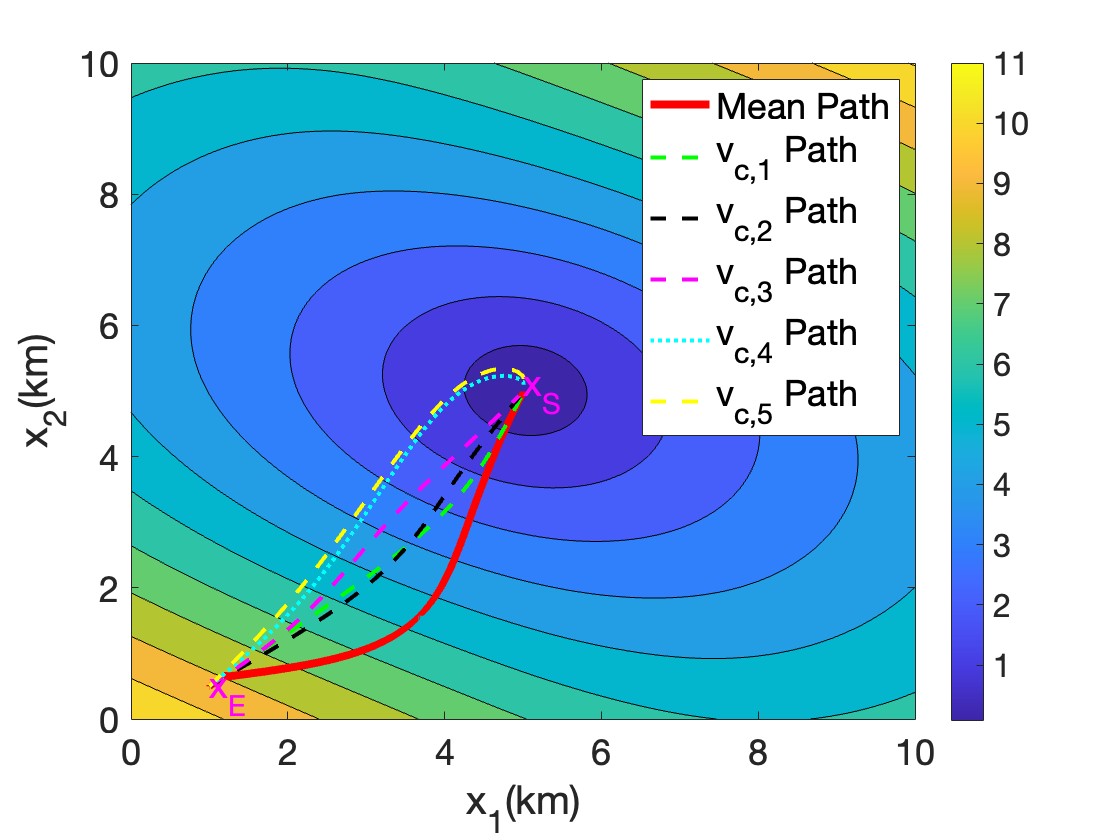}
    \caption{Example 4b: Mean uncertainty path (red) and five deterministic paths each with respect to only one ocean current model $\bv_{c,i}$ for $i=1,\cdots,5$ on top of the contour of $u$ in $10^3$ seconds considering 5 ocean current models with greater heterogeneity.}
    \label{fig:5_model_subdivided}
\end{figure}

Figure \ref{fig:4b_contours} presents the contour plots of the reachability times for this experiment; Figure \ref{fig:5_model_subdivided} illustrates the optimal mean path $\gamma^*$, along with the optimal path for each ensemble member.  As expected, the optimal mean path $\gamma^*$ tends to deviate most significantly from the paths of the individual ensemble members in those regions where the individual reachability times $T_i$ vary most significantly from one another.

These examples illustrate the importance of incorporating ensemble-based path planning, particularly when the ocean current environment has highly nonuniform currents throughout the domain of interest as the optimal path under uncertainty is nontrivial to ascertain.

\subsection{Example 5: A Realistic Ocean Model Application}

In our final example, we consider a more practical example that represents a realistic model ocean current.  Unsteady, periodically varying, time dependent double-gyre flows are observable in the ocean \cite{Shadden2005, Wang2016}.  Here, much like the vortex examples, while the general structure of the current is known, the uncertainty in the $i$-th ocean model lies in the strength of the ocean current, $\xi_{1,i}$, and the location of the gyre, $\xi_{2,i}$ and $\xi_{3,i}$.  We will consider a two-member ensemble of ocean models which are defined as

\begin{align}
    &\bv_{c,i}(\bx,t) = \begin{bmatrix}
    -\lambda \xi_{1,i} \sin(\pi f(x+\delta \xi_{2,i},t))*\cos(\pi(y+\delta \xi_{3,i}))\\
    \lambda \xi_{1,i} \cos(\pi f(x+\delta \xi_{2,i},t))*\sin(\pi(y+\delta \xi_{3,i})) *(2a(t)(x + \delta \xi_{2,i}) + b(t))
    \end{bmatrix}, \\
    &f(x,t) = a(t)x^2 + b(t)x, \\
    &a(t) = \sigma \sin(\omega t), \\
    &b(t) = 1 - 2\sigma \sin (\omega t),
\end{align}
where $\lambda = 0.5$ affects the current strength, $\delta=0.5$ affects the gyre location, $\sigma=0.05$ affects the steadiness where $\sigma=0$ would be a steady flow, and $\omega = \frac{\pi}{5}$ affects gyre period of motion.  We randomly and independently generate $\xi_{1,i}$, $\xi_{1,2}$, and $\xi_{1,3}$ from the uniform distribution $[0,1]$.  The probability of realization is set to be $p(1)=p(2)=0.5$ for the ensemble members.

We maintain the previous computational domain of $\Omega = [0\text{ km},10\text{ km}] \times [0\text{ km},10\text{ km}]$; the starting and target positions are $\bx_S = (5\text{ km},5\text{ km})$ and $\bx_E = (1\text{ km},0.5\text{ km})$, respectively. We set $\eta=(\eta_{x_1},\eta_{x_2})=(1.5,1.5)$.

\begin{figure}[t!]
     \centering
     \begin{subfigure}[b]{0.3\textwidth}
         \centering
         \includegraphics[width=\textwidth]{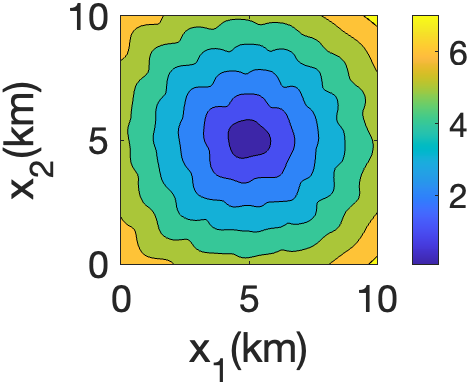}
         \caption{$u$}
         \label{fig:5_u}
     \end{subfigure}
     \hfill
     \begin{subfigure}[b]{0.3\textwidth}
         \centering
         \includegraphics[width=\textwidth]{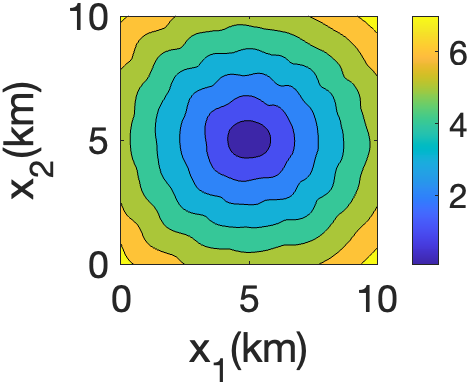}
         \caption{$T_1$}
         \label{fig:5_T1}
     \end{subfigure}
     \hfill
     \begin{subfigure}[b]{0.3\textwidth}
         \centering
         \includegraphics[width=\textwidth]{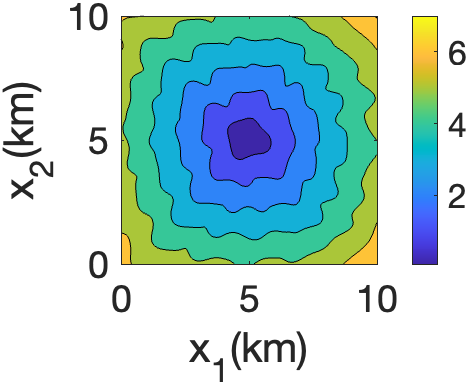}
         \caption{$T_2$}
         \label{fig:5_T2}
     \end{subfigure}
        \caption{Example 5: Contour plots of the reachability times $u$, $T_1$, and $T_2$ in $10^3$ seconds.}
        \label{fig:5_contours}
\end{figure}

\begin{figure}[t!]
    \centering
    \includegraphics[scale = 0.2]{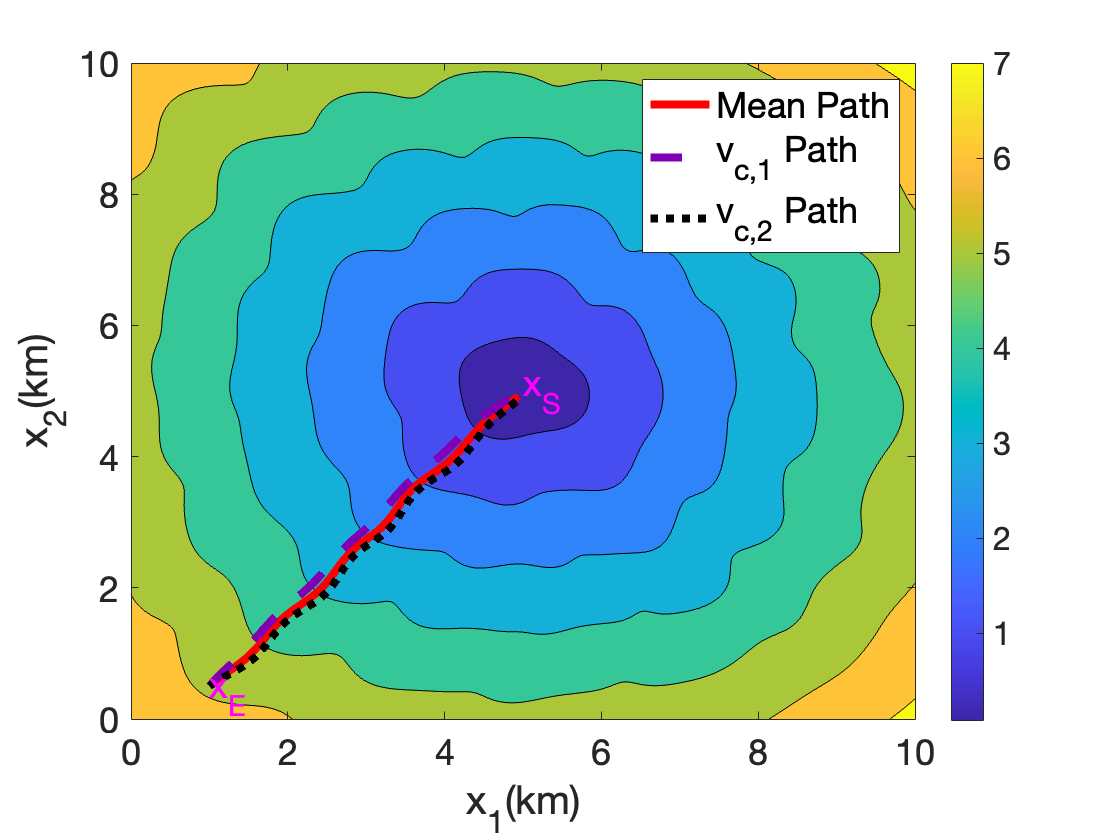}
    \caption{Example 5: Two deterministic paths (purple and black) and the mean uncertainty path (red) from $(5\text{ km},5\text{ km})$ to $(1\text{ km},0.5\text{ km})$ on top of contour of $u$ in $10^3$ seconds.}
    \label{fig:double_gyre_paths}
\end{figure}

Choosing the parameters as above, we assume that the uncertainties of the model are significant but controlled.  This is reflected in Figure \ref{fig:5_contours} where we see $T_1$ and $T_2$ are clearly distinct, but are still reasonably similar.  As previously seen, when $T_1 \approx T_2$, we see that $u$ reflects characteristics of both $T_1$ and $T_2$.  Figure \ref{fig:double_gyre_paths} mirrors this in that the deterministic paths constructed are different, but close in trajectory with the mean uncertainty path situated between them.

\section{Conclusion}
\label{sec:5}
In this work, we extended the deterministic path planning framework~\cite{Brandman2023} to account for uncertainty in the ocean current forecast.  We accounted for this uncertainty through an ensemble of ocean velocity models; using this, we derived a novel system of Hamilton-Jacobi PDEs that enable us to determine the minimum mean reachability time and the associated optimal paths.  In order to solve this PDE system, we developed an extension of the fast sweeping method to systems of PDEs that alternates PDE solves within a single sweep and demonstrated its superior performance relative to an alternative sweeping strategy.  We %also 
demonstrated convergence of our method and illustrated, through several examples, that the resulting optimal mean-time trajectory can differ significantly from the optimal trajectories associated with individual ensemble members.  

Looking ahead, we are interested in extending this work in several directions.  %From an applications point of view, one potential area to explore would be the uncertainty of the location of the target object that is advected by the unknown ocean current, specifically for time-dependent moving targets.  Combining the optimal path construction with data assimilation techniques is a possibility in reducing the target uncertainty.  
%The observed difference in behavior between $T_1 \approx T_2$ and $\frac{T_1}{T_2}>>1$ warrants the consideration of a deviation measure to understand how distinct $T_1$ must be from $T_2$ such that we observe a very distinct mean optimal path.  %Another interesting application would be to consider %cases where the vehicle's maximum speed is smaller %than the ocean current.  
From an algorithmic perspective, there are two natural directions to consider.  First, it seems natural to insert obstacles into the computational domain.  Second, we would like to remove the assumption that each point in the domain is reachable.  
%In addition, 
%we are interested in quantifying the extent to which adaptive mesh refinement of subdomains where high vorticity %exists can increase the accuracy of solutions while %keeping the computational cost approximately fixed.

From an analysis perspective, several questions remain.  First, it would be of great interest to extend the existence and uniqueness of viscosity solutions resulting from a single Hamilton-Jacobi equation to solutions resulting from a system of Hamilton-Jacobi equations.  This would provide insight regarding convergence for fast sweeping applied to systems.  Second, the difference in behavior between $T_1 \approx T_2$ and $\frac{T_1}{T_2}>>1$, observed in example 3b, needs further investigation.  In particular, we would like to quantify how deviation between the $T_i$ impacts the mean optimal path.

\section*{Acknowledgment}

This research was supported by the U.S. Naval Research Laboratory under a 6.2 Base Program (62A1G1), with support for J. Valyou provided under the Naval Research Enterprise Internship Program (NREIP).

\section*{Appendix A. Linear-in-time constant current analytical solution}
\label{sec:app}

We derive the analytical solution for linear-in-time constant currents.

Take a current of the form 
\begin{equation}
    v_c(\bx,t)=t\tilde{\bv}_c + \hat{\bv}_c
    \label{eq:linearintime}
\end{equation}

where $\tilde{\bv}_c$ and $\hat{\bv}_c$ are constant currents.  Without loss of generality, take $\bx_S=(0,0)$ and $\bx_E=(l,0)$ where $l$ is a constant.

Let $\bv_g = \bv_w + \bv_c$ where $\bv_g$ is the vehicle velocity relative to the ground.

Since there is no movement in the second direction component,

\begin{equation}
    \int_{0}^{T^*} v_{w}^{(2)} \,dt = - \int_{0}^{T^*} v_{c}^{(2)} \,dt.
    \label{eq:inteq}
\end{equation}

Given the conditions in \eqref{eqn:ode_condition}, we can write the maximum vehicle velocity as

\begin{equation}
    v_{w}^{(1)} = \sqrt{s_{max}^2 - {v_{w}^{(2)}}^2}.\end{equation}
    
Therefore, the following represents movement in the first direction component:

\begin{equation}
    l = \int_{0}^{T^*} \sqrt{s_{max}^2 - {v_{w}^{(2)}}^2} + v_{c}^{(1)} \,dt.
\end{equation}

Assuming $T^*>0$ and normalizing by $T^*$,

\begin{equation}
    \frac{l}{T^*} = \frac{\int_{0}^{T^*} \sqrt{s_{max}^2 - {v_{w}^{(2)}}^2} + v_{c}^{(1)} \,dt}{T^*}.
\end{equation}

Applying the definition in \eqref{eq:linearintime},
\begin{align}
    \frac{l}{T^*} &= \frac{\int_{0}^{T^*} \sqrt{s_{max}^2 - {v_{w}^{(2)}}^2} + t\tilde{v}_{c}^{(1)} + \hat{v}_{c}^{(1)} \,dt}{T^*}\\
    &= \frac{\int_{0}^{T^*} \sqrt{s_{max}^2 - {v_{w}^{(2)}}^2} \,dt}{T^*} + \frac{T^*\tilde{v}_{c}^{(1)}}{2} + \hat{v}_{c}^{(1)}.
\end{align}

By Jensen's Inequality,

\begin{equation}
    \frac{l}{T^*} \leq \sqrt{s_{max}^2 - \left(\frac{\int_{0}^{T^*} v_{w}^{(2)} \,dt}{T^*}\right)^2} + \frac{T^*\tilde{v}_{c}^{(1)}}{2} + \hat{v}_{c}^{(1)}.
\end{equation}

Applying the definition in \eqref{eq:inteq},

\begin{equation}
    \frac{l}{T^*} \leq \sqrt{s_{max}^2 - \left(\frac{-\int_{0}^{T^*} v_{c}^{(2)} \,dt}{T^*}\right)^2} + \frac{T^*\tilde{v}_{c}^{(1)}}{2} + \hat{v}_{c}^{(1)}.
\end{equation}

Applying the definition in \eqref{eq:linearintime} again,

\begin{align}
    \frac{l}{T^*} &\leq \sqrt{s_{max}^2 - \left(\frac{-\int_{0}^{T^*} t\tilde{v}_{c}^{(2)} + \hat{v}_{c}^{(2)} \,dt}{T^*}\right)^2} + \frac{T^*\tilde{v}_{c}^{(1)}}{2} + \hat{v}_{c}^{(1)}\\
    &= \sqrt{s_{max}^2 - \left(-\frac{T^*\tilde{v}_c^{(2)}}{2} - \hat{v}_{c}^{(2)}\right)^2} + \frac{T^*\tilde{v}_{c}^{(1)}}{2} + \hat{v}_{c}^{(1)}\\
    &= \sqrt{s_{max}^2 - \frac{\left(T^* \tilde{v}_c^{(2)}\right)^2}{4} + T^* \tilde{v}_c^{(2)} \hat{v}_c^{(2)} - \left(\hat{v}_c^{(2)}\right)^2 } + \frac{T^* \tilde{v}_c^{(1)}}{2} + \hat{v}_c^{(1)}.
\end{align}

Multiplying each side by $T^*$,

\begin{equation}
    l \leq T^*\sqrt{s_{max}^2 - \frac{\left(T^* \tilde{v}_c^{(2)}\right)^2}{4} + T^* \tilde{v}_c^{(2)} \hat{v}_c^{(2)} - \left(\hat{v}_c^{(2)}\right)^2 } + \frac{T^* \tilde{v}_c^{(1)}}{2} + \hat{v}_c^{(1)}.
\end{equation}

If the optimal vehicle velocity is chosen, then this equation becomes equality giving an implicit analytical solution

\begin{equation}
    T^* = \frac{l}{\sqrt{s_{max}^2 - \frac{\left(T^* \tilde{v}_c^{(2)}\right)^2}{4} + T^* \tilde{v}_c^{(2)} \hat{v}_c^{(2)} - \left(\hat{v}_c^{(2)}\right)^2 } + \frac{T^* \tilde{v}_c^{(1)}}{2} + \hat{v}_c^{(1)}}.
\end{equation}

%%===========================================================================================%%
%% If you are submitting to one of the Nature Portfolio journals, using the eJP submission   %%
%% system, please include the references within the manuscript file itself. You may do this  %%
%% by copying the reference list from your .bbl file, paste it into the main manuscript .tex %%
%% file, and delete the associated \verb+\bibliography+ commands.                            %%
%%===========================================================================================%%

\bibliography{SpringerJournal/sn-bibliography}% common bib file
%% if required, the content of .bbl file can be included here once bbl is generated
%%\input sn-article.bbl

\end{document}